\numberwithin{equation}{section}
\newtheorem{thm}{Theorem}
\newtheorem{lemma}[thm]{Lemma}
\newtheorem{proposition}[thm]{Proposition}
\newtheorem{corollary}[thm]{Corollary}
\newtheorem*{MainThm}{Theorem \ref{thm:main1}}
\newtheorem*{MainThm2}{Theorem \ref{thm:main2}}
\theoremstyle{definition}
\newtheorem{example}[thm]{Example}
\newtheorem{definition}[thm]{Definition}
\newtheorem{remark}[thm]{Remark}
\numberwithin{thm}{section}
\newcommand{\PP}{\mathbb{P}}
\newcommand{\CC}{\mathbb{C}}
\renewcommand{\L}{\mathcal{L}}
\newcommand{\Gr}{\mathrm{Gr}}
\newcommand{\RED}{*(white!80!red) \color{black}}
\newcommand{\BLUE}{\color{blue}}
\newcommand{\GRAY}{*(white!80!black)}
\newcommand{\TrSSYT}{\mathrm{TrSSYT}}
\title[A Generalized RSK for Enumerating Linear Series on $n$-pointed Curves]{A Generalized RSK for Enumerating \\ Linear Series on $n$-pointed Curves}
\author{Maria Gillespie}
\thanks{Partially supported by NSF DMS award number 2054391.} 
\address{Department of Mathematics, Colorado State University, Fort Collins, CO, USA} \email{maria.gillespie@colostate.edu }
\author{Andrew Reimer-Berg}
\address{Department of Mathematics, Colorado State University, Fort Collins, CO, USA
}
\email{andrew.reimer-berg@colostate.edu}
\date{\today}
\begin{document}

\maketitle

\begin{abstract}
    We give a combinatorial proof of a recent geometric result of Farkas and Lian on linear series on curves with prescribed incidence conditions.  The result states that the expected number of degree-$d$ morphisms from a general genus $g$, $n$-marked curve $C$ to $\PP^r$, sending the marked points on $C$ to specified general points in $\PP^r$, is equal to $(r+1)^g$ for sufficiently large $d$.  This computation may be rephrased as an intersection problem on Grassmannians, which has a natural combinatorial interpretation in terms of Young tableaux by the classical Littlewood-Richardson rule.   We give a bijection, generalizing the well-known RSK correspondence, between the tableaux in question and the $(r+1)$-ary sequences of length $g$, and we explore our bijection's combinatorial properties.  
    
    We also apply similar methods to give a combinatorial interpretation and proof of the fact that, in the modified setting in which $r=1$ and several marked points map to the same point in $\PP^1$, the number of morphisms is still $2^g$ for sufficiently large $d$. 
\end{abstract}

\section{Introduction}
In a recent paper \cite{FarkasLian}, Farkas and Lian provide enumerative formulas for the number of maps from a curve $C$ to a complex projective space $\PP^r$ with specified incidence conditions.  In particular, let $C$ be a general curve of genus $g$, and let $x_1,\ldots,x_n$ be distinct general points on $C$.   Also choose distinct general points $y_1,\ldots,y_n$ in $\PP^r$.  Then we write $L_{g,r,d}$ for the number of degree $d$ morphisms $$f:C\to \PP^r$$ for which $f(x_i)=y_i$ for all $i=1,2,\ldots,n$, which is finite precisely when $nr=dr+d+r-rg$. 

For sufficiently large $d$ and setting $n=(dr+d+r-rg)/r$, it was shown in \cite{FarkasLian} that \begin{equation}
    L_{g,r,d}=(r+1)^g.
\end{equation} In addition, $L_{g,r,d}$ is equal to a certain intersection of Schubert cycles in the Grassmannian.  The latter formula has the following combinatorial interpretation, as we will show in Section \ref{sec:L-interp}.  

\begin{definition}
Define an \textbf{$L$-tableau with parameters $(g,r,d)$} to be a way of filling the boxes of an $(r+1)\times (d-r)$ grid with $rg$ ``red'' integers and $(d-r)(r+1)-rg$ ``blue'' integers such that:
\begin{itemize}
    \item The red integers are left-and-bottom justified, and weakly increase up columns and strictly increase across rows.  They consist of the numbers $1,2,\ldots,g$ each occurring exactly $r$ times.
    \item The blue integers, which are necessarily right-and-top justified, are strictly increasing up columns and weakly increasing across rows.  Their values are from $\{0,1,\ldots,r\}$.
\end{itemize}
\end{definition}

\begin{example}\label{ex:Lgrd}
  The following is an $L$-tableau with parameters $(4,3,9)$.  We write the ``red'' numbers as black font with a red shaded background for clarity.
  \begin{center}
      \begin{ytableau}
 \RED 2 & \RED 4 & \BLUE 1 & \BLUE 3 & \BLUE 3 & \BLUE 3 \\
 \RED 1 & \RED 3 & \RED 4 & \BLUE 1 & \BLUE 2 & \BLUE 2 \\
 \RED 1 & \RED 2 & \RED 3 & \BLUE 0 & \BLUE 1 & \BLUE 1 \\
 \RED 1 & \RED 2 & \RED 3 & \RED 4 & \BLUE 0 & \BLUE 0 \\
      \end{ytableau}
  \end{center}
\end{example}

The work in \cite{FarkasLian} shows that the number of $L$-tableaux with parameters $(g,r,d)$ is equal to $(r+1)^g$  whenever $d\ge rg+r$ (and $r|d$, so that $n$ is an integer) via geometric methods, and asks for a combinatorial proof.  Our first main result resolves this open problem by finding a combinatorial proof, of the following stronger result. 

\begin{thm}\label{thm:main1}
  The number of $L$-tableaux with parameters $(g,r,d)$ is $(r+1)^g$ whenever $d\ge g+r$.
\end{thm}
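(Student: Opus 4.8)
The plan is to construct an explicit bijection, generalizing the RSK correspondence as the abstract promises, between $L$-tableaux with parameters $(g,r,d)$ and the set $\{0,1,\ldots,r\}^g$ of $(r+1)$-ary sequences of length $g$, since the latter obviously has size $(r+1)^g$. First I would observe that the red part of an $L$-tableau, being left-and-bottom justified, weakly increasing up columns and strictly increasing across rows, with content $1^r 2^r \cdots g^r$, is essentially a semistandard filling of a fixed shape rotated $180^\circ$; there are many such red fillings, so the red part alone does not determine the sequence, and the blue part must compensate. The key structural point is that in each of the $r+1$ rows of the $(r+1)\times(d-r)$ grid, the red entries occupy an initial segment and the blue entries the complementary final segment, and the blue entries in row $i$ (counting so that the bottom row has the longest red part, as in Example~\ref{ex:Lgrd}) are weakly increasing across that row with values in $\{0,1,\ldots,r\}$ and strictly increasing up columns. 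Because $d\ge g+r$ guarantees each row is long enough, the blue part has enough room to encode an arbitrary weakly increasing word in each row subject only to the column-strictness constraint with its neighbors.

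The main step is then to set up a recursive insertion procedure reading the columns of the red tableau from right to left (equivalently, stripping off the largest letter $g$, which occupies $r$ boxes forming a vertical-strip-like configuration, and recording where it sits). Stripping the letter $g$ from the red part removes $r$ boxes; the positions of these boxes relative to the boundary between red and blue, together with the blue entries that get ``exposed,'' should be packaged into a single symbol in $\{0,1,\ldots,r\}$ — for instance, the number of the row in which the topmost $g$ appears, or equivalently the index recording how the red/blue staircase boundary moves. After removing the letter $g$ and adjusting the blue entries (decrementing exposed blue values or shifting them, in the spirit of RSK's reverse bumping), one obtains an $L$-tableau with parameters $(g-1, r, d)$ — here one must check $d\ge (g-1)+r$ still holds, which is immediate — and one iterates. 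Concatenating the $g$ recorded symbols yields the desired sequence, and the inverse map inserts letters $1,2,\ldots,g$ one at a time, each insertion guided by the corresponding sequence entry, maintaining the $L$-tableau conditions.

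The hard part will be verifying that this single recorded symbol genuinely captures all the freedom at each stage — i.e.\ that removing the top letter $g$ (with its $r$ copies) together with exactly one symbol from $\{0,1,\ldots,r\}$ is reversible. The subtlety is combinatorial bookkeeping at the red/blue interface: when the $r$ copies of $g$ are removed, the blue region in those rows must be extended by one box each while preserving weak increase along rows and strict increase up columns, and the choice of how to fill the newly exposed boxes is exactly what the recorded symbol must control. I expect one must prove a lemma describing precisely which configurations of the $r$ copies of $g$ can occur (they form a connected ``border strip'' hugging the top of the red region, parameterized by $r+1$ possibilities because of the strict-across-rows, weak-up-columns conditions) and then match these $r+1$ possibilities bijectively with the values $0,1,\ldots,r$ while simultaneously showing the blue adjustment is forced. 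Once that local reversibility lemma is in hand, the global bijection and hence the count $(r+1)^g$ follow by induction on $g$, with the base case $g=0$ giving the unique all-blue tableau (which exists and is unique precisely because $d\ge r$).
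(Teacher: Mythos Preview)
Your recursive plan is in the right spirit but the key lemma you propose is false, and the misstep is exactly where you say ``the hard part'' lies. You claim that the $r$ copies of $g$ in the red tableau are ``parameterized by $r+1$ possibilities'' and that the blue adjustment is then forced. Neither is true. If the red tableau with the $g$'s removed has shape $\mu=(\mu_1\ge\cdots\ge\mu_{r+1})$, then the row $m$ \emph{not} receiving a $g$ must satisfy $\mu_m>\mu_{m+1}$ (or $m=r+1$); the number of such $m$ is the number of outer corners of $\mu$, which is typically strictly less than $r+1$ (for a rectangular $\mu$ it is $1$). So the red side alone does not carry $r+1$ choices, and the remaining freedom lives on the blue side, which therefore cannot be ``forced''. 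Your vague ``decrementing exposed blue values or shifting them'' is exactly where the missing content is, and as stated the induction does not close.

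The paper's proof avoids this entanglement by decoupling red from blue completely. First it truncates to $d=g+r$, so the grid is $(r+1)\times g$ and the red and blue shapes are complementary. Then it proves a separate bijection $\varphi$ from red tableaux to $180^\circ$-rotated \emph{standard} Young tableaux of the complementary shape: for each $i$ one records the unique row of the red tableau that does \emph{not} contain $i$, and places a box labeled $i$ there. (This is the part you were groping toward with ``the number of the row in which the topmost $g$ appears'', but it is a bijection on the red tableau alone, with no blue interaction.) Independently, rotating the blue tableau $180^\circ$ and replacing each entry $i$ by $r-i$ yields an ordinary SSYT with entries in $\{0,\ldots,r\}$ of that same complementary shape. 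One now has precisely a pair (SSYT, SYT) of the same shape of size $g$, and a single application of RSK produces the $(r+1)$-ary word. No step-by-step red/blue bookkeeping is needed; the variable number of corners on the red side is exactly compensated by the variable number of SSYT extensions on the blue side, because both are packaged into the same RSK pair.
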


 Note that in this purely combinatorial setting we only require $d\ge g+r$ rather than $d\ge rg+r$, and $d$ is not necessarily divisible by $r$.

Our methods generalize the \textbf{RSK algorithm} (see Section \ref{sec:RSK}).  In particular, in the case $r=1$, the pair of red and blue tableaux correspond directly under the RSK bijection to the binary sequences of length $g$, as we will show in Section \ref{sec:r=1}.  For $r>1$ we introduce an intermediate bijection (see Definition \ref{def:varphi}) to reduce to the RSK correspondence once again. 

\begin{remark}
In the case $r=1$, there are several known proofs of the fact that $L_{g,1,d}=2^g$ for sufficiently large $d$, including via \textit{scattering amplitudes} for $d=g+1$ \cite{Tevelev2020} and by establishing recursions using the boundary geometry of the moduli space of \textit{Hurwitz covers} \cite{CPS}.  Neither of these proofs were combinatorial in nature, though the recursions arising in the latter paper by Cela-Pandharipande-Schmitt \cite{CPS} are related to Dyck paths and other Catalan objects for small $d$.
\end{remark}

Our second main result enumerates a related set of maps.  In particular, set $r=1$, fix an integer $k$ with $1\le k\le \min(n,d)$, and consider choices of points $y_1,\ldots,y_n\in \PP^1$ such that $$y_1=y_2=\cdots=y_k.$$  Write $L'_{g,d,k}$ for the number, assuming that $n=(dr+d+r-rg)/r=2d+1-g$, of maps $f:C\to \PP^1$ such that $f(x_i)=y_i$ for all $i$.  We use a similar interpretation in terms of a family of Young tableaux that we call \textbf{$L'$-tableaux}, starting from an intersection theoretic formula in Grassmannians for $L'_{g,d,k}$ given by Farkas and Lian \cite{FarkasLian}, to enumerate these maps for sufficiently high degree curves.

\begin{thm}\label{thm:main2}
  If $d\ge g+k$, we have $L'_{g,d,k}=2^g$.
\end{thm}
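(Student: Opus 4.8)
The plan is to reuse, in the coincident-point setting, the two-step strategy that proves the $r=1$ case of Theorem~\ref{thm:main1}: first rewrite $L'_{g,d,k}$ as the cardinality of an explicit set of tableaux, then biject that set with $\{0,1\}^g$. For the first step I would start from the Farkas--Lian intersection formula for $L'_{g,d,k}$ and expand the relevant product of Schubert classes by the Littlewood--Richardson rule, just as in Section~\ref{sec:L-interp}, obtaining the family of $L'$-tableaux. The essential difference from an ordinary $(g,1,d)$ $L$-tableau is that the $k$ incidence conditions $f(x_1)=\dots=f(x_k)$ to a common point no longer contribute $k$ separate copies of the codimension-one class $\sigma_1$, but a single special Schubert class $\sigma_{(k)}$ (``the pencil contains a section vanishing on $x_1+\dots+x_k$''). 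Combinatorially this should manifest as a prescribed initial horizontal strip of length $k$ sitting inside the ambient $2\times(d-1)$ grid, over which the usual red filling by $1,\dots,g$ (each used once, since $r=1$) and blue filling by $0$'s and $1$'s are laid down with the monotonicity rules from the definition of an $L$-tableau.

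For the second step I would invoke the RSK correspondence exactly as in Section~\ref{sec:r=1}, where for $r=1$ the red/blue pair of an $L$-tableau is carried bijectively to a length-$g$ binary word. The claim to verify is that deleting the prescribed strip of length $k$ from an $L'$-tableau and then applying this $r=1$ bijection is again a bijection onto $\{0,1\}^g$: the frozen strip only rigidifies the first few steps of the insertion/recording procedure, so no binary words are created or destroyed. The hypothesis $d\ge g+k$ enters precisely here, as a width bound: once a strip of length $k$ occupies part of the $2\times(d-1)$ rectangle, the red region (which can be as wide as $g$) together with that strip must still fit horizontally, and $d-1\ge g+k-1$ is, up to the routine bookkeeping, exactly what guarantees surjectivity of the map onto $\{0,1\}^g$, so that the count cannot drop below $2^g$. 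Combining the two steps gives $L'_{g,d,k}=2^g$.

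The main obstacle, and the step requiring genuine care, is the interface between the frozen strip and the RSK bijection. Concretely, I expect the work to lie in (i) confirming that the coincidence condition really translates into the class $\sigma_{(k)}$ with the grid dimensions and boundary behavior claimed, and (ii) checking that the restriction of the $r=1$ bijection to $L'$-tableaux is well defined (it never produces a configuration violating the $L'$-tableau rules) and surjective. Step (ii) is where $d\ge g+k$ must be used in an essential way; verifying that this bound is sharp---that for smaller $d$ genuinely fewer maps occur, as in the unmodified problem---is the natural consistency check on the argument.
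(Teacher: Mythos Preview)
Your plan has a genuine gap at step~(i): the Farkas--Lian formula for $L'_{g,d,k}$ is \emph{not} a single product of Schubert classes. Equation~\eqref{eq:Lprime} expresses $L'_{g,d,k}$ as a \emph{difference} of two integrals, one over $\Gr(2,d+1)$ involving $\sigma_{k-1}$ and one over $\Gr(2,d)$ involving $\sigma_{k-2}$. So the coincidence condition does not simply replace $k$ copies of $\sigma_1$ by one $\sigma_{(k)}$, and there is no single positive family of ``$L'$-tableaux with a frozen strip of length $k$'' that directly enumerates $L'_{g,d,k}$. Your step~(ii), which assumes such a family exists and then applies RSK to it, therefore never gets off the ground.

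What the paper actually does is interpret each of the two integrals separately, obtaining a set $L'_+$ of \emph{positive} $L'$-tableaux (in a $2\times(d-1)$ grid with a gray strip of length $k-1$) and a set $L'_-$ of \emph{negative} ones (in a $2\times(d-2)$ grid with a gray strip of length $k-2$), so that $L'_{g,d,k}=|L'_+|-|L'_-|$. The key missing idea in your plan is the injection $\psi:L'_-\to L'_+$ that appends a blue $1$ and a gray box on the right; its image is exactly those positive tableaux whose bottom row contains a blue $1$, and the complement $L'_+\setminus\psi(L'_-)$ is then characterized as the positive tableaux with \emph{no} blue $1$ in the bottom row. Only after this signed cancellation does one have a genuinely positive set, and it is to \emph{this} set that the $r=1$ RSK argument from Section~\ref{sec:r=1} applies under the hypothesis $d\ge g+k$. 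Your intuition about the width bound and about RSK on the first $g$ columns is correct for this final step, but you cannot reach that step without first handling the subtraction.
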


Notice that the formula $2^g$ coincides with $(r+1)^g$ at $r=1$.  Indeed, it is remarked in \cite{FarkasLian} that the simple formula $L'_{g,d,k}=2^g$ may be explained geometrically in the same way that the formula $L_{g,r,d}=(r+1)^g$ does.  Further details and a more general formula for maps with arbitrary ramification profiles were given by Cela and Lian \cite{CelaLian}.  However, the formula from \cite{FarkasLian} coming from Grassmannians is stated as a \textit{difference} of two sums of Schubert class intersection products (see Section \ref{sec:Lprime-interp}), and there was previously no stated enumerative combinatorial interpretation of the latter formula, or for that matter a direct explanation for why the difference should be positive.   We provide such an interpretation in order to prove Theorem \ref{thm:main2}.  

This also raises the question of whether there is a natural generalization of $L'$-tableaux that enables one to get a handle on the $r>1$ setting with $y_1=\cdots=y_k$.

\subsection{Outline}
In Section \ref{sec:background} below we recall the necessary background results and notation from tableaux theory and intersection theory on Grassmannians.  
In Section \ref{sec:grd} we translate the geometric formulas for $L_{g,r,d}$ into a Young tableaux enumeration problem and prove Theorem \ref{thm:main1}.  We also show that our bijection reduces to ordinary RSK in the case $r=1$ (Section \ref{sec:r=1}), and allows us to recover a classical theorem of Castelnuovo in the case $d=r+\frac{rg}{r+1}$ (Section \ref{sec:Castelnuovo}).  

In Section \ref{sec:gdk} we define $L'$-tableaux for $L'_{g,d,k}$ and prove Theorem \ref{thm:main2}.  Finally, in Section \ref{sec:combinatorics}, we more fully explore the combinatorial properties of our constructions.

\subsection{Acknowledgments}

We thank Carl Lian for several enlightening conversations pertaining to this work, and thank Jake Levinson for bringing our attention to the open combinatorial problem.  We also thank Renzo Cavalieri, Alexander Hulpke, and Mark Shoemaker for their helpful feedback.

\section{Background}\label{sec:background}

We briefly recall several known facts about Young tableaux, linear series, and Grassmannians.

\subsection{The RSK correspondence on words}\label{sec:RSK}

A \textbf{partition} of $n$ is a nonincreasing sequence of positive integers $\lambda=(\lambda_1,\ldots,\lambda_k)$ for which $|\lambda|:=\sum_i \lambda_i=n$.  The \textbf{Young diagram} of $\lambda$ is the left- and bottom-justified grid of unit squares in the first quadrant (called \textit{boxes}) for which there are $\lambda_i$ boxes in the $i$-th row from the bottom for all $i$\footnote{Here we are using the so-called `French' convention for drawing Young diagrams.}. A \textbf{semistandard Young tableau}, or \textbf{SSYT}, of shape $\lambda$ is a way of filling the boxes of the Young diagram with nonnegative integers such that the rows are weakly increasing from left to right and the columns are strictly increasing from bottom to top.  An SSYT is \textbf{standard}, written SYT, if the entries are $1,2,\ldots,n$ each used exactly once.  The \textbf{shape} of a Young tableau is the underlying (unlabeled) Young diagram.  Its \textbf{content} is the tuple $(m_1,m_2,\ldots)$ where $m_i$ is the number of times $i$ appears in the tableau.  For instance, an SYT has content $(1^n)=(1,1,\ldots,1)$.

The RSK correspondence, in its most general form, is a bijection between lexicographically-sorted two-line arrays and pairs of semistandard Young tableaux of the same shape (see \cite{Fulton} for an excellent overview).  It is constructed via an algorithmic insertion procedure starting from the two-line array.
In the case that the bottom row of the two-line array consists of the numbers $1,2,3,\ldots,n$ in order, the top row may be any sequence, and we obtain the following special case of the RSK correspondence. 

\begin{proposition}[RSK for words]\label{prop:RSK}
  Let $A(r,n)=\{0,1,2,3,\ldots,r\}^n$ denote the set of all length-$n$ sequences with entries from $\{0,1,2,\ldots,r\}$.  Let $B(r,n)$ be the set of all pairs $(P,Q)$ such that $P$ is a semistandard Young tableau with letters in $\{0,1,2,\ldots,r\}$, $Q$ is a standard Young tableau, and $P$ and $Q$ have the same shape of size $n$.

  There is an explicit bijection, called the RSK correspondence, from $A(r,n)$ to $B(r,n)$ for all $r,n$.
\end{proposition}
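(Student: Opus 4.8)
The plan is to describe the standard Schensted row-insertion algorithm explicitly and verify it gives the desired bijection. Given a sequence $w = (w_1, w_2, \ldots, w_n) \in A(r,n)$, I would build the pair $(P, Q)$ one letter at a time: starting from the empty pair, after reading $w_1, \ldots, w_i$ I will have produced a pair of tableaux of the same shape $\lambda^{(i)}$ with $|\lambda^{(i)}| = i$. At step $i$, I row-insert $w_i$ into the current $P$: it bumps into the bottom row, replacing the leftmost entry strictly greater than $w_i$ (if none exists, it is appended to the end of the row), and the bumped entry is then inserted into the next row up by the same rule, and so on until some entry comes to rest in a new box. That new box is added to $Q$ with label $i$. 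Since the bottom line of the two-line array is $1 < 2 < \cdots < n$, the labels inserted into $Q$ are strictly increasing in the order the boxes are created, which forces $Q$ to be standard; meanwhile the bumping rules are exactly what is needed to keep $P$ semistandard with entries in $\{0, 1, \ldots, r\}$.

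The key steps, in order, are: (1) verify that row-insertion preserves semistandardness of $P$ — the classical Schensted lemma that the bumped entries in successive rows are weakly decreasing and that the shape grows by a single box forming a valid Young diagram; (2) verify that $Q$ remains standard, which is immediate from the fact that boxes are added in increasing label order and each new box is a valid addition to a Young diagram (an ``outer corner''); (3) construct the inverse map by reverse row-insertion — locate the box of $Q$ labeled $n$, use it to identify which entry of $P$ to eject from that row, then reverse-bump downward through the rows to recover $w_n$ and a smaller pair $(P', Q')$, and iterate; (4) check that the forward and reverse procedures are mutually inverse, which is the content of the reversibility of a single Schensted insertion. Steps (1)–(4) are all classical (see \cite{Fulton}), so I would state them crisply and cite, rather than reprove, the single-insertion reversibility lemma.

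The main obstacle — really the only nontrivial point — is the reversibility of a single row-insertion step: one must know, given the output tableau and the position of the newly created box, exactly which row-by-row sequence of reverse-bumps to perform, and that this recovers the original inserted letter uniquely. The crucial observation is that the box created in $Q$ at step $i$ is always an outer corner of $\lambda^{(i)}$, so knowing $Q$ tells us precisely where each insertion terminated; reverse-bumping from that corner downward is then forced at every stage (at each row, eject the entry in the column just reached, and in the row below replace the rightmost entry strictly less than the ejected value). Once this lemma is in hand, the bijection between $A(r,n)$ and $B(r,n)$ follows by induction on $n$: forward insertion is a well-defined map $A(r,n) \to B(r,n)$, reverse insertion is a well-defined map $B(r,n) \to A(r,n)$, and the single-step reversibility lemma shows the composites are the identity in both directions.
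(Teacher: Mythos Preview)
Your proposal is correct and follows the classical Schensted argument; in fact the paper does not prove this proposition at all but simply states it as background and refers to \cite{Fulton} for the construction and proof. So your write-up supplies strictly more detail than the paper does, and it is exactly the standard argument one would find in that reference.

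One small slip worth fixing: in step (1) you say ``the bumped entries in successive rows are weakly decreasing.'' The bumped \emph{values} are strictly increasing as you move up the rows (each bumped entry is strictly greater than the entry that displaced it); what is weakly decreasing is the \emph{column index} at which the bump occurs, and it is this latter fact that guarantees the new box lands at a valid outer corner of the Young diagram. Your reverse-bumping description is correct as written.
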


We will refer to the length-$n$ sequences of letters from $\{0,1,\ldots,r\}$ as \textbf{$(r+1)$-ary sequences}, generalizing the notion of a \textbf{binary sequence} from $\{0,1\}$.  

We do not require the full definition of the RSK bijection explicitly here, and we refer the reader to \cite{Fulton} for details.  We will use the following property that follows from the definition of semistandard Young tableaux. 

\begin{remark}
  Since $P$ has letters in $\{0,1,2,3,\ldots,r\}$, and columns are strictly increasing,
  any pair $(P,Q)$ in $B(r,n)$ can have height at most $r+1$.
\end{remark}

\begin{example}
  The $4$-ary sequence $0,2,1,1,0,3,0,0,1$ in $A(3,9)$ corresponds under RSK to the following pair of tableaux in $B(3,9)$: $$\begin{ytableau}
  2 \\ 1 & 1 & 3 \\ 0 & 0 & 0 & 0 & 1 \end{ytableau}\,\,,\,\,
  \begin{ytableau}
  5 \\ 3 & 7 & 8 \\ 1 & 2 & 4 & 6 & 9 \end{ytableau}$$ 
\end{example}

\subsection{Linear series and calculations in the Grassmannian}

The definitions of $L_{g,r,d}$ and $L'_{g,r,d}$ may be made more rigorous via the theory of linear series on curves.  (See \cite{EisenbudHarris} for an introduction to this topic.) A  \textbf{linear series} of type $\mathfrak{g}^r_d$ on a smooth curve $C$ can be thought of as a $r$-dimensional linear family of sets of $d$ points on $C$.  More formally, this data is encoded by a pair $(\L, V)$ where $\L$ is a line bundle of degree $d$ and $V\subseteq H^0(C,\L)$ is a dimension $r+1$ space of sections, which in turn corresponds to a map $\phi_{(\L,V)}$ from $C$ to $\PP^r$.  

There has been much study of enumeration of linear series with prescribed \textit{ramification} conditions at specified points, including the work of Eisenbud and Harris \cite{EisenbudHarris83}, Osserman \cite{Osserman}, Chan and Pflueger \cite{ChanPflueger1}, Chan, L\'{o}pez Mart\'{i}n, Pflueger, and Teixidor i Bigas \cite{CLPT}, Larson, Larson, and Vogt \cite{LLV}, and others.  Many of these results relied on the combinatorial tools of Young tableaux, symmetric function theory, and other aspects of algebraic combinatorics, and in \cite{ChanPflueger2} led to new results in algebraic combinatorics as well.

Here we consider the related problem of enumerating linear series with prescribed \textit{incidence} conditions at specified marked points.   Write $G^r_d(C)$ to denote the moduli space of $\mathfrak{g}^r_d$'s on $C$, and let $x_1,\ldots,x_n$ be general marked points on $C$.  Then the values $L_{g,r,d}$ may alternatively be defined as the degree of the evaluation map $\mathrm{ev}_{(x_1,\ldots,x_n)}$ from $G^r_d(C)$ to the moduli space $P^n_r$ of $n$ points in $\PP^r$, given by evaluating the maps $\phi_{(\L,V)}$ at the points $x_1,\ldots,x_n$. 

In \cite{FarkasLian}, a degeneration argument on linear series, starting with a reduction to genus $0$, is used to reduce the problem of computing the degree of this evaluation map to a standard intersection problem in Schubert calculus.  For genus $0$ curves $C$, $H^0(C,\mathcal{L})$ in general has dimension $d+1$ as a complex vector space for sufficiently high $d$.  Therefore, the $r+1$-dimensional subspaces $V\subseteq H^0(C,\mathcal{L})$ sweep out a copy of the Grassmannian $\mathrm{Gr}(r+1,d+1)$.

The Grassmannian $\mathrm{Gr}(r+1,d+1)$, defined as the moduli space of $r+1$-dimensional subspaces of $\CC^{d+1}$, has a well-known \textbf{Schubert decomposition} (with respect to a given flag) into Schubert varieties $X_\lambda$.  (See \cite[Ch.\ 9]{Fulton} or \cite{Gillespie} for background on Schubert calculus.) Here, $\lambda$ ranges over all partitions $\lambda=(\lambda_1, \lambda_2,\ldots, \lambda_k)$ for which which $$\lambda_1\le d+1 \hspace{1cm}\text{and}\hspace{1cm} k\le r+1.$$  In other words, the Young diagram of $\lambda$ fits inside an $(r+1)\times (d+1)$ grid, as shown in Figure \ref{fig:young-diagram}.

\begin{figure}
    \centering
    \begin{ytableau}
     \empty & \empty & \empty & \empty & \empty & \empty\\
     \GRAY & \empty & \empty & \empty & \empty& \empty \\
     \GRAY & \GRAY & \empty & \empty & \empty& \empty \\
     \GRAY & \GRAY & \GRAY & \GRAY & \GRAY& \empty \\  
     \GRAY & \GRAY & \GRAY & \GRAY & \GRAY& \empty\\     
    \end{ytableau}
    \caption{The Young diagram (shaded) of the partition $(5,5,2,1)$, drawn inside a $5\times 6$ grid.  This corresponds to the Schubert class $\sigma_{(5,5,2,1)}$ in $A^\bullet(\Gr(5,11))$.}
    \label{fig:young-diagram}
\end{figure}

The Schubert varieties give rise to a basis of \textbf{Schubert classes} $\sigma_\lambda:=[X_\lambda]$ of its Chow ring $A^\bullet(\Gr(r+1,d+1))$.  With respect to this basis, it is shown in \cite{FarkasLian} that whenever either $d\ge rg+r$, $d=r+\frac{rg}{r+1}$, or $r=1$, we have 
\begin{equation}\label{eq:Lgrd}
    L_{g,r,d}=\int_{\Gr(r+1,d+1)} \sigma_{1^r}^g\cdot \left[ \sum_{\alpha_0+\cdots+\alpha_r=(r+1)(d-r)-rg}\left(\prod_{i=0}^r \sigma_{\alpha_i}\right)\right].
\end{equation}
Here the notation $\sigma_{1^r}$ is shorthand for $\sigma_{(1,1,1,\ldots,1)}$ where the tuple $(1,1,\ldots,1)$ has length $r$, and $\sigma_{\alpha_i}$ is shorthand for $\sigma_{(\alpha_i)}$.  The integral indicates that the sum of products of Schubert cycles in question expands in the Schubert basis as a constant multiple of $$\sigma_{(d+1)^r}:=\sigma_{(d+1,d+1,\ldots,d+1)},$$ and the integral is defined to be this constant coefficient.

The corresponding result from \cite{FarkasLian} for $L'_{g,d,k}$ is that for $k+g\le 2d+1$ and $2\le k\le d$, we have
\begin{equation}\label{eq:Lprime}
    L'_{g,d,k}=\int_{\Gr(2,d+1)} \sigma_1^g \sigma_{k-1}\left(\sum_{i+j=2d-g-k-1}\sigma_i\sigma_j\right)-\int_{\Gr(2,d)}\sigma_1^g \sigma_{k-2}\left(\sum_{i+j=2d-g-k-2} \sigma_i\sigma_j\right).
\end{equation}

\subsection{The iterated Pieri rule}\label{sec:LR-rule}

The intersections of Schubert cycles on the Grassmannian may be calculated via symmetric function theory, as products of Schubert classes correspond to products of \textit{Schur functions}.  Indeed, let $\{s_\lambda\}$ be the classical Schur function basis of the ring of symmetric functions, where $\lambda$ ranges over all partitions (see \cite{Fulton} or \cite[Ch.\ 7]{Stanley}).  Then the integral in equation \eqref{eq:Lgrd} is equal to the coefficient of $s_{(d+1)^{(r+1)}}$ in the expansion \begin{equation}\label{eq:schur}
s_{1^r}^g\cdot \left[ \sum_{\alpha_0+\cdots+\alpha_r=(r+1)(d-r)-rg}\left(\prod_{i=0}^r s_{\alpha_i}\right)\right]
\end{equation}
The coefficients of products of Schur functions expressed in the Schur basis are called \textbf{Littlewood-Richardson coefficients}.  In particular we can write $$s_\lambda\cdot s_\mu=\sum c^{\nu}_{\lambda\mu} s_\nu$$ where the Littlewood-Richardson coefficients $c^{\nu}_{\lambda\mu}$ are all nonnegative integers. 

Many combinatorial formulas for the Littlewood-Richardson coefficients are known (see \cite{Fulton} for several interpretations via Young tableaux alone).  In our setting we will only need to focus on the cases when one of $\lambda$ or $\mu$ is either a horizontal row or vertical column, which are often called the horizontal and vertical \textbf{Pieri rules}.  We recall these rules here.

\begin{definition}
    Suppose $\lambda$ and $\mu$ are partitions for which the Young diagram of $\mu$ is a subset of the diagram of $\lambda$. The \textbf{skew shape} $\lambda/\mu$ is the set of boxes that are in $\lambda$ but not in $\mu$.
    
    A skew shape is a \textbf{horizontal strip} if no two of its boxes are in the same column, and it is a \textbf{vertical strip} if no two of its boxes are in the same row.  
\end{definition}

Examples of horizontal and vertical strips are shown in Figure \ref{fig:skew}.  We also say the strip \textbf{extends} the inner partition $\mu$.  The following rules are well-known (see, for instance, \cite[pp.\ 24--25]{Fulton}.)

\begin{figure}
    \centering
    \begin{ytableau}
     \empty & \GRAY & \GRAY \\
     \empty & \empty & \empty & \GRAY \\
     \empty & \empty & \empty & \empty \\
     \empty & \empty & \empty & \empty &\GRAY & \GRAY & \GRAY
    \end{ytableau}\hspace{2cm}
    \begin{ytableau}
      \empty & \empty & \GRAY \\
     \empty  & \empty & \GRAY \\
     \empty  & \empty & \empty \\
     \empty  & \empty & \empty &\GRAY
    \end{ytableau}
    \caption{At left, the horizontal strip $(6,4,4,3)/(4,4,3,1)$. At right, the vertical strip $(4,3,3,3)/(3,2,2,2)$.  Each is drawn as a set of shaded boxes.}
    \label{fig:skew}
\end{figure}

\begin{proposition}[Pieri rules]\label{prop:pieri}
  For $\mu=(\alpha)$ a single-row partition, the Littlewood-Richardson coefficient $c^\nu_{\lambda\mu}=c^\nu_{\lambda,(\alpha)}$ is equal to $1$ if $\nu/\lambda$ is a horizontal strip, and $0$ otherwise.
  
  For $\mu=(1^r)$ a single-column partition, the Littlewood-Richardson coefficient $c^\nu_{\lambda\mu}=c^\nu_{\lambda,(1^r)}$ is equal to $1$ if $\nu/\lambda$ is a vertical strip, and $0$ otherwise.
\end{proposition}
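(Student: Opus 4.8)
The plan is to deduce both statements from the standard symmetric function dictionary, working in the ring of symmetric functions equipped with the Hall inner product $\langle\cdot,\cdot\rangle$ under which the Schur functions $\{s_\lambda\}$ form an orthonormal basis. First I would recall (citing \cite{Fulton} or \cite{Stanley}) three ingredients. (i) A one-row Schur function is a complete homogeneous symmetric function, $s_{(\alpha)}=h_\alpha$, and a one-column Schur function is an elementary symmetric function, $s_{(1^r)}=e_r$. (ii) The skew Schur function, defined by $s_{\nu/\lambda}:=\sum_\mu c^\nu_{\lambda\mu}s_\mu$, satisfies the adjointness identity $\langle f\cdot s_\lambda,\,s_\nu\rangle=\langle f,\,s_{\nu/\lambda}\rangle$ for every symmetric function $f$; this is immediate by expanding $f$ in the Schur basis and using orthonormality together with $c^\nu_{\lambda\mu}=c^\nu_{\mu\lambda}$. (iii) $\langle h_\mu,\,s_{\nu/\lambda}\rangle=K_{\nu/\lambda,\mu}$, the number of semistandard fillings of the skew shape $\nu/\lambda$ with content $\mu$; this follows from the combinatorial description $s_{\nu/\lambda}=\sum_{T}x^T$ over semistandard fillings of $\nu/\lambda$, since $\{h_\mu\}$ and the monomial basis $\{m_\mu\}$ are dual, so $\langle h_\mu,\,s_{\nu/\lambda}\rangle$ is just the coefficient of the monomial $x^\mu$ in $s_{\nu/\lambda}$.

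Granting these, the horizontal Pieri rule is a one-line computation: taking $f=s_{(\alpha)}=h_\alpha$,
\[
c^\nu_{\lambda,(\alpha)}=\langle h_\alpha\cdot s_\lambda,\,s_\nu\rangle=\langle h_\alpha,\,s_{\nu/\lambda}\rangle=K_{\nu/\lambda,(\alpha)}.
\]
Now $K_{\nu/\lambda,(\alpha)}$ counts the fillings of the skew shape $\nu/\lambda$ in which every box contains the letter $1$. The rows of such a filling are automatically weakly increasing, while columns of a semistandard tableau must be strictly increasing, so such a filling exists precisely when no column of $\nu/\lambda$ contains two boxes — that is, when $\nu/\lambda$ is a horizontal strip — and in that case it is unique. (The condition $|\nu/\lambda|=\alpha$ is forced by the grading.) Hence $c^\nu_{\lambda,(\alpha)}$ is $1$ if $\nu/\lambda$ is a horizontal strip and $0$ otherwise.

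For the vertical Pieri rule I would apply the fundamental involution $\omega$, which is an isometry of the ring of symmetric functions, interchanges $h_r\leftrightarrow e_r$, and sends $s_{\nu/\lambda}$ to $s_{\nu'/\lambda'}$, where $\theta'$ denotes the conjugate shape. Taking $f=s_{(1^r)}=e_r$,
\[
c^\nu_{\lambda,(1^r)}=\langle e_r\cdot s_\lambda,\,s_\nu\rangle=\langle e_r,\,s_{\nu/\lambda}\rangle=\langle \omega(e_r),\,\omega(s_{\nu/\lambda})\rangle=\langle h_r,\,s_{\nu'/\lambda'}\rangle=K_{\nu'/\lambda',(r)},
\]
which by the previous paragraph equals $1$ when $\nu'/\lambda'$ is a horizontal strip and $0$ otherwise. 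Since conjugation exchanges rows and columns, $\nu'/\lambda'$ is a horizontal strip exactly when $\nu/\lambda$ is a vertical strip, which finishes the proof.

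The only real obstacle is assembling the standard facts (i)--(iii): none is deep, but a fully self-contained account would require setting up the Hall inner product and the combinatorial definition of (skew) Schur functions. Alternatively one can avoid the inner product altogether and argue bijectively via RSK: multiplying $s_\lambda$ by $h_\alpha=\sum_w x^w$ (the sum over weakly increasing words $w$ of length $\alpha$) corresponds, under row-inserting $w$ into a semistandard tableau of shape $\lambda$, to a weight-preserving bijection onto semistandard tableaux whose shape is obtained from $\lambda$ by adjoining a horizontal strip — Schensted's lemma that a weakly increasing insertion word enlarges the shape by a horizontal strip — with column insertion of strictly decreasing words giving the vertical case. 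On this route the obstacle becomes proving (or citing) Schensted's strip lemma and the invertibility of insertion once the inner shape $\lambda$ is known.
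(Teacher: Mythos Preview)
Your argument is correct and is one of the standard derivations of the Pieri rules. However, the paper does not actually prove Proposition~\ref{prop:pieri}: it treats the result as background and simply cites \cite[pp.~24--25]{Fulton}. So there is no ``paper's own proof'' to compare against; you have supplied a proof where the authors chose to quote the literature.

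If anything, your write-up is more than the paper needs. The Hall inner product/adjointness route and the $\omega$-involution reduction are exactly the textbook approach (essentially Stanley's or Macdonald's presentation), and the alternative RSK/Schensted-strip argument you sketch is the one closer in spirit to Fulton's exposition that the paper cites. Either would be appropriate as an appendix-level justification, but for the purposes of this paper a citation suffices.
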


Proposition \ref{prop:pieri} gives us a rule for multiplying any Schur function by either $s_{(1^r)}$ or $s_\alpha$, and expanding the result again as a sum of Schur functions.  In particular, $$s_\lambda \cdot s_{(\alpha)}=\sum_{\nu/\lambda\in \mathrm{Horz(\alpha)}} s_{\nu} \hspace{1cm} s_{\lambda}\cdot s_{(1^r)}=\sum_{\nu/\lambda\in \mathrm{Vert(r)}} s_{\nu}$$ where $\mathrm{Horz}(\alpha)$ and $\mathrm{Vert}(r)$ are the sets of all horiztonal strips of size $\alpha$ and vertical strips of size $r$ respectively. 

We can iterate to give a rule for any product of row or column Schur functions.  For instance, multiplying both sides of the left hand equation above by $s_{(1^r)}$ gives $$s_\lambda \cdot s_{(\alpha)}\cdot s_{(1^r)}=\sum_{\nu/\lambda\in \mathrm{Horz}(\alpha)}s_\nu \cdot s_{(1^r)}=\sum_{\nu/\lambda\in \mathrm{Horz}(\alpha)}\left(\sum_{\rho/\nu\in \mathrm{Vert}(r)}s_\rho\right).$$ Interchanging the order of summation, we see that the number of times the Schur function $s_\rho$ appears in this expansion is equal to the number of ways to extend $\lambda$ by a horizontal strip of size $\alpha$ and then by a vertical strip of size $r$ in order to fill shape $\rho$.  This observation may be generalized as follows.  

\begin{corollary}[Iterated Pieri rule]\label{cor:iPieri}
  Let $\mu^{(1)},\mu^{(2)},\ldots,\mu^{(k)}$ be partitions, each of which is either a horizontal row or a vertical column.  Then $$s_{\mu^{(1)}}\cdot s_{\mu^{(2)}}\cdot \cdots \cdot s_{\mu^{(k)}}=\sum c^{\nu}_{\mu^{(1)}\cdots\mu^{(k)}}s_{\nu}$$ where $c^{\nu}_{\mu^{(1)}\cdots\mu^{(k)}}$ is equal to the number of ways to extend $\mu^{(1)}$ by horizontal or vertical strips (as indicated by each $\mu^{(i)}$) of sizes $|\mu^{(2)}|,\ldots,|\mu^{(k)}|$ such that the total resulting shape is $\nu$.
\end{corollary}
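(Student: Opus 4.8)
The plan is to induct on $k$, with the Pieri rules of Proposition \ref{prop:pieri} serving both as the base case and as the engine of the inductive step. For $k=1$ the statement is essentially vacuous: $s_{\mu^{(1)}}$ expands as itself in the Schur basis, and the only way to extend $\mu^{(1)}$ by an empty list of strips yields the shape $\mu^{(1)}$, so $c^{\nu}_{\mu^{(1)}}=\delta_{\nu,\mu^{(1)}}$, matching the claimed expansion.

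For the inductive step, I would assume the formula holds for the product of the first $k-1$ factors. Expanding that product by the inductive hypothesis and then multiplying on the right by $s_{\mu^{(k)}}$ gives
$$s_{\mu^{(1)}}\cdot\dotsb\cdot s_{\mu^{(k)}}=\left(\sum_{\rho} c^{\rho}_{\mu^{(1)}\cdots\mu^{(k-1)}}\, s_{\rho}\right)\cdot s_{\mu^{(k)}}=\sum_{\rho} c^{\rho}_{\mu^{(1)}\cdots\mu^{(k-1)}}\left(s_{\rho}\cdot s_{\mu^{(k)}}\right).$$
Since $\mu^{(k)}$ is a single row of size $|\mu^{(k)}|$ or a single column of size $|\mu^{(k)}|$, Proposition \ref{prop:pieri} lets me rewrite each inner product as $s_{\rho}\cdot s_{\mu^{(k)}}=\sum_{\nu} s_{\nu}$, where $\nu$ ranges over shapes such that $\nu/\rho$ is a horizontal (respectively vertical) strip of size $|\mu^{(k)}|$. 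Substituting this and interchanging the order of summation, the coefficient of $s_{\nu}$ in the total product becomes
$$\sum_{\rho} c^{\rho}_{\mu^{(1)}\cdots\mu^{(k-1)}}\cdot \#\{\, \nu/\rho \text{ is a strip of the type and size dictated by } \mu^{(k)}\,\},$$
in which the inner count is $0$ or $1$.

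It remains to identify this sum with the claimed count $c^{\nu}_{\mu^{(1)}\cdots\mu^{(k)}}$. By the inductive hypothesis, $c^{\rho}_{\mu^{(1)}\cdots\mu^{(k-1)}}$ counts the sequences of strip-extensions $\mu^{(1)}\subseteq\dotsb\subseteq\rho$ of the prescribed types and sizes $|\mu^{(2)}|,\dotsc,|\mu^{(k-1)}|$; appending to each such sequence a strip of the type and size given by $\mu^{(k)}$ that fills $\rho$ out to $\nu$ puts these augmented sequences in bijection with all sequences of strip-extensions $\mu^{(1)}\subseteq\dotsb\subseteq\rho\subseteq\nu$ of types and sizes $|\mu^{(2)}|,\dotsc,|\mu^{(k)}|$, since such a nested sequence records each of its intermediate shapes uniquely. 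Summing over the intermediate shape $\rho$ therefore enumerates exactly the full set of such sequences, so the displayed coefficient equals $c^{\nu}_{\mu^{(1)}\cdots\mu^{(k)}}$, completing the induction. I expect the only genuinely delicate point to be this final bookkeeping — verifying that "multiply the number of partial sequences through $\rho$ by the number of admissible final strips, then sum over $\rho$" neither over- nor undercounts complete sequences — but this is immediate once one observes that a chain of nested shapes is determined by, and determines, all of its terms.
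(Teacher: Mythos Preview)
Your proof is correct and follows essentially the same approach as the paper: the paper does not give a formal proof of Corollary~\ref{cor:iPieri} but immediately precedes it with a worked example (multiplying $s_\lambda\cdot s_{(\alpha)}\cdot s_{(1^r)}$, expanding via Pieri, and interchanging the order of summation), remarking that ``this observation may be generalized.'' Your induction on $k$ is exactly that generalization carried out in full, and your bookkeeping at the end is sound.
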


In order to keep track of the horizontal and vertical strips, we will label the squares of the strip corresponding to $\mu^{(i)}$ by $i$ for each $i$.  This results in a tableau-like object that enumerates the generalized Pieri coefficients $c^\nu_{\mu^{(1)}\cdots\mu^{(k)}}$ above.

\begin{example}
  The coefficient $c^{(4,2,1)}_{(2),(1,1),(3)}$ is equal to $2$, because there are two ways to fill the boxes of shape $(4,2,1)$ with a horizontal strip of two $1$'s, a vertical strip of two $2$'s extending it, and a horizontal strip of three $3$'s that extends the shape again:  $$\begin{ytableau}
   2 \\
   2 & 3 \\
   1 & 1 & 3 & 3 
  \end{ytableau}\hspace{2cm}\begin{ytableau}
   3 \\
   2 & 3 \\
   1 & 1 & 2 & 3 
  \end{ytableau}$$
\end{example}

In the special case when we have all horizontal strips, we recover the well-known notion of a \textbf{semistandard Young tableau}, or \textbf{SSYT}: a filling of the boxes of a (possibly skew) Young diagram with numbers such that the rows are weakly increasing from left to right and the columns are strictly increasing from bottom to top.  For the case of all vertical strips, we say a \textbf{transposed semistandard Young tableau} is a filling of a Young diagram with strictly increasing \textit{rows} and weakly increasing \textit{columns}.  We similarly obtain transposed SSYT's in the case of all vertical strips.  We summarize these observations in the following remark.

\begin{remark}
Notice that $c^\nu_{(\alpha_1),\ldots,(\alpha_k)}$ is equal to the number of semistandard Young tableaux of shape $\nu$ with exactly $\alpha_i$ $i$'s for each $i$.  Similarly, $c^\nu_{(1^{r_1}),\ldots,(1^{r_k})}$ is the number of transposed semistandard Young tableaux of shape $\nu$ with exactly $r_i$ $i$'s for each $i$.
\end{remark}

\section{$L$-tableaux and enumeration by $(r+1)^g$}\label{sec:grd}

We now have the tools to show that the $L$-tableaux with parameters $(g,r,d)$ do indeed enumerate the integrals $L_{g,r,d}$ in the Grassmannian, starting from equation \eqref{eq:Lgrd}.  We will then show that the $L$-tableaux are enumerated by $(r+1)^g$.

\subsection{The $L$-tableaux}\label{sec:L-interp} Corollary \ref{cor:iPieri}, combined with the fact that the integral in equation \eqref{eq:Lgrd} is the coefficient of $s_{(d-r)^{r+1}}$ in the corresponding product \eqref{eq:schur}, shows that $$L_{g,r,d}=\sum_{\alpha_0+\cdots+\alpha_r+rg=(r+1)(d-r)} c^{(d-r)^{r+1}}_{(1^r),(1^r),\ldots,(1^r),(\alpha_0),\ldots,(\alpha_r)}$$ where the subscripts on the coefficient contain $g$ copies of $(1^r)$.  This summation is therefore the number of ways to form a transposed SSYT using each of the numbers $1,2,\ldots,g$ exactly $r$ times, and then extend it to fill the rest of the $(r+1)\times (d-r)$ grid with a semistandard Young tableau using the numbers $0,1,\ldots,r$ in some varying amounts $\alpha_0,\ldots,\alpha_r$ each.

This precisely matches the definition of $L$-tableaux given in the introduction, which we restate in terms of our new notation here.

\begin{definition}\label{def:Lgrdtab}
An \textbf{$L$-tableau with parameters $(g,r,d)$} is a  way of filling an $(r+1)\times (d-r)$ rectangular grid with:
\begin{itemize}
    \item (The `red' tableau.) A transposed SSYT having exactly $r$ copies of each of the numbers $1,2,\ldots,g$.  That is, its content is $(r^g)=(r,r,r,\ldots,r)$.
    \item (The `blue' tableau.) A semistandard Young tableau on the remaining skew shape of boxes, with values from $\{0,1,\ldots,r\}$.
\end{itemize}
\end{definition}

\begin{remark}
The preprint \cite{FarkasLian} mistakenly uses an ordinary (not transposed) SSYT for the red tableau; a correction will appear in a later version of their work \cite{Lian2021}.
\end{remark}

See Example \ref{ex:Lgrd} for an example.  Our discussion thus far, starting from Equation \eqref{eq:Lgrd}, has shown:

\begin{proposition}
  The number of $L$-tableau with parameters $(g,r,d)$ is equal to $L_{g,r,d}$ whenever either $d\ge rg+r$, $d=r+\frac{rg}{r+1}$, or $r=1$.
\end{proposition}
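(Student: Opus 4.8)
The plan is to read the desired count directly off the Schubert-calculus formula \eqref{eq:Lgrd} together with the iterated Pieri rule, so that the proof is essentially a translation. Under any one of the three hypotheses on $(g,r,d)$, the Farkas--Lian formula \eqref{eq:Lgrd} identifies $L_{g,r,d}$ with the integral over $\Gr(r+1,d+1)$ of $\sigma_{1^r}^g\cdot\bigl[\sum_{\alpha_0+\cdots+\alpha_r=(r+1)(d-r)-rg}\prod_{i=0}^r\sigma_{\alpha_i}\bigr]$. First I would invoke the standard dictionary between the Chow ring $A^\bullet(\Gr(r+1,d+1))$ and the ring of symmetric functions (already recalled in Section \ref{sec:LR-rule}): this integral is exactly the coefficient of the top Schur function $s_{(d-r)^{r+1}}$, corresponding to the full $(r+1)\times(d-r)$ rectangle, in the Schur product \eqref{eq:schur}. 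No truncation subtlety arises, since every partition occurring while building up to $(d-r)^{r+1}$ by Pieri moves is obtained only by adding boxes, hence automatically sits inside that rectangle.

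Next I would apply Corollary \ref{cor:iPieri} to this product, taking the ordered list of factors to be $g$ copies of $s_{(1^r)}$ followed by $s_{(\alpha_0)},\ldots,s_{(\alpha_r)}$. This expresses the coefficient of $s_{(d-r)^{r+1}}$ as $\sum c^{(d-r)^{r+1}}_{(1^r),\ldots,(1^r),(\alpha_0),\ldots,(\alpha_r)}$, the sum being over all $\alpha_0+\cdots+\alpha_r=(r+1)(d-r)-rg$ with each $\alpha_i\ge 0$. By the remark following Corollary \ref{cor:iPieri}, a single such coefficient counts the ways to fill the rectangle $(d-r)^{r+1}$ by first laying down $g$ vertical strips of size $r$ (labeled $1,\ldots,g$), which assemble into a transposed SSYT of content $(r^g)$, and then extending by horizontal strips of sizes $\alpha_0,\ldots,\alpha_r$ labeled $0,\ldots,r$, which assemble into an SSYT with entries in $\{0,1,\ldots,r\}$ on the complementary skew shape. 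Summing over all admissible $(\alpha_0,\ldots,\alpha_r)$ simply removes the constraint on the content of the second (``blue'') tableau, and the resulting count is precisely Definition \ref{def:Lgrdtab}. Hence $L_{g,r,d}$ equals the number of $L$-tableaux with parameters $(g,r,d)$.

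There is no real obstacle here, as the statement is a restatement of the preceding discussion rather than a theorem with its own content; the points that require care are (i) correctly applying the Schubert/symmetric-function dictionary so that the integral becomes the coefficient of $s_{(d-r)^{r+1}}$ rather than of some class outside the box; (ii) keeping the strip types straight, so that the red tableau emerges as a \emph{transposed} SSYT (vertical strips) and the blue one as an ordinary SSYT (horizontal strips), matching the corrected Definition \ref{def:Lgrdtab}; and (iii) noting that the three hypotheses on $(g,r,d)$ are used \emph{only} to license the use of \eqref{eq:Lgrd} --- the combinatorial identity itself holds for all $(g,r,d)$, which is exactly why Theorem \ref{thm:main1} can afterward enumerate $L$-tableaux under the weaker hypothesis $d\ge g+r$.
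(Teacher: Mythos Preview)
Your proposal is correct and follows essentially the same route as the paper: invoke \eqref{eq:Lgrd} under the stated hypotheses, pass to the coefficient of $s_{(d-r)^{r+1}}$ in the Schur product \eqref{eq:schur}, apply the iterated Pieri rule (Corollary~\ref{cor:iPieri}) with $g$ vertical strips followed by $r+1$ horizontal strips, and recognize the result as Definition~\ref{def:Lgrdtab}. The paper presents this as the running discussion immediately preceding the proposition rather than as a separate proof, but the content is identical; your additional remarks on why no boxing subtlety occurs and on where the three hypotheses enter are accurate elaborations.
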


We now show that we can ``truncate'' by removing some of the right-hand columns of the grid to reduce to a simpler case.

\begin{lemma}[Truncation]\label{lem:chop}
  For any $g,r,d$ with $d\ge g+r$, the number of $L$-tableaux with parameters $(g,r,d)$ is equal to the number of $L$-tableaux with parameters $(g,r,g+r)$.  
\end{lemma}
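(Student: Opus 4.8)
The plan is to show that in an $L$-tableau with parameters $(g,r,d)$, once $d \ge g+r$, every column strictly to the right of the $(g+r)$-th column is forced to be filled in a unique way, so that deleting those columns gives a bijection with $L$-tableaux with parameters $(g,r,g+r)$. First I would analyze the red tableau: it has content $(r^g)$, so it occupies at most $rg$ boxes, spread over a transposed SSYT (strictly increasing rows, weakly increasing columns) inside an $(r+1)\times(d-r)$ grid. Since rows are strictly increasing and the entries are among $\{1,\dots,g\}$, each row of the red part has length at most $g$; hence the red tableau is entirely contained in the leftmost $g$ columns of the grid. In particular, columns $g+1, g+2, \dots, d-r$ of the grid contain only blue entries.

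Next I would pin down the blue entries in those all-blue columns. The blue tableau is an ordinary SSYT on the skew complement with values in $\{0,1,\dots,r\}$; a full column of an SSYT of height $r+1$ with entries from an $(r+1)$-element set $\{0,\dots,r\}$ and strictly increasing bottom-to-top must be exactly $0,1,2,\dots,r$ read upward. So every one of the columns $g+1,\dots,d-r$ is forced to be the column $(0,1,\dots,r)^{T}$, contributing nothing new; moreover such columns are automatically compatible with the weakly-increasing-row condition on either side. This shows that an $L$-tableau with parameters $(g,r,d)$ is completely determined by its restriction to the first $g+r$ columns — wait, I should double check the count: the grid for parameters $(g,r,g+r)$ has width $(g+r)-r = g$, while the grid for parameters $(g,r,d)$ has width $d-r$, and I am deleting the $(d-r)-g$ rightmost columns, each forced to be $(0,\dots,r)^T$. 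So the truncation map sends an $L$-tableau with parameters $(g,r,d)$ to its first $g$ columns.

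Then I would check that the restriction to the first $g$ columns is a genuine $L$-tableau with parameters $(g,r,g+r)$: the red part is unchanged (it already lived in the first $g$ columns) and still has content $(r^g)$ and the transposed-SSYT property; the blue part, restricted to the first $g$ columns, is still a legal SSYT on the appropriate skew shape with entries in $\{0,\dots,r\}$, since restricting to an initial set of columns preserves weakly increasing rows and strictly increasing columns. Conversely, given an $L$-tableau with parameters $(g,r,g+r)$, append $(d-r)-g$ copies of the column $(0,1,\dots,r)^T$ on the right; I would verify this is a valid $L$-tableau with parameters $(g,r,d)$ — the new columns are strictly increasing, the row-monotonicity across the seam holds because the last blue entry in each row is at most $r$ and matches or is below the appended value in that row, and the red part is untouched. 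These two maps are mutually inverse, giving the bijection.

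The main obstacle is making the boundary bookkeeping airtight: specifically, confirming that the rightmost non-forced blue column (column $g$, the seam) always ends each row with an entry $\le$ the corresponding entry of $(0,1,\dots,r)^T$ so that appending forced columns keeps rows weakly increasing, and dually that deleting them cannot violate anything. This reduces to the observation that in any SSYT with entries in $\{0,\dots,r\}$, the entry in row $i$ (counting from the bottom, $1$-indexed) of a height-$(r+1)$ column is at least $i-1$, with equality throughout a full column; so a full column $(0,1,\dots,r)^T$ sits weakly below (entrywise in each row) any other column to its left and weakly above (entrywise) any column to its right — but here every column from $g+1$ on is itself full, so they all agree, and only the single seam comparison with column $g$ needs the inequality, which is immediate since column $g$'s entries are also $\ge (0,1,\dots,r)^T$ entrywise by the same lemma. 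Once this is set up, the rest is routine.
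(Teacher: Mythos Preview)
Your approach is essentially the paper's: observe that the red tableau has width at most $g$ (strictly increasing bottom row with entries in $\{1,\dots,g\}$), that every column past the $g$-th is therefore entirely blue and hence forced to be $(0,1,\dots,r)^T$, and that deleting or appending these forced columns gives the bijection. The paper's proof is just a terser version of the same argument and does not bother to spell out the seam check.

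One concrete slip in your write-up, though: in the seam argument you have the inequality pointing the wrong way. You correctly state that what is needed is that the blue entry in row $i$ of column $g$ be \emph{at most} $i-1$ (so that appending the full column $(0,1,\dots,r)^T$ preserves weakly increasing rows). But then you invoke a lemma saying the entry is \emph{at least} $i-1$ and conclude ``column $g$'s entries are also $\ge (0,1,\dots,r)^T$ entrywise,'' which is the opposite of what you need and in any case does not follow, since column $g$ of the blue tableau is generally not a full height-$(r+1)$ column. The correct (and equally easy) observation is the upper bound: in a top-justified blue column occupying rows $j,\dots,r+1$, strict increase together with the top entry being $\le r$ forces the entry in row $i$ to be $\le r-(r+1-i)=i-1$. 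With that fix, your plan goes through exactly as the paper's does.
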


\begin{proof}
  Suppose $d\ge g+r$.  Notice that, since it is transposed semistandard, the red tableau has width at most $g$, since its bottom row is strictly increasing from left to right and uses only the numbers $1,2,\ldots,g$.  Therefore, any column to the right of the $g$-th column is filled entirely with blue numbers, which strictly increase up the columns using the numbers $0,1,2,\ldots,r$, necessarily exactly once since the columns have height $r+1$. 
  
  It follows that there is only one way to fill each of the columns to the right of column $g$, and these columns therefore do not contribute to the enumeration.  We therefore may remove the last $d-r-g$ columns and find that the number of $L$-tableaux with parameters $(g,r,d)$ equals the number with parameters $(g,r,g+r)$.
\end{proof}

Lemma \ref{lem:chop} tells us that in order to understand $L_{g,r,d}$ for $d\ge g+r$, it suffices to study the case $d=g+r$.  We will restrict to this case throughout the remainder of this section. 

\begin{remark}
When $d=g+r$, the rectangle containing the $L$-tableaux is size $(r+1)g=rg+g$.  The red tableau has size $rg$ and so the blue tableau has size $g$.
\end{remark}

\subsection{Enumeration by $(r+1)^g$}

In this section we prove Theorem \ref{thm:main1}.  We first define the following sets of tableaux.

\begin{definition}
    Let $\TrSSYT(g,r)$ be the set of all transposed SSYT's of content $(r^g)=(r,r,\ldots,r)$ and height $\le r+1$.
    \label{def:red}
\end{definition}

Note that $\TrSSYT(g,r)$ is the set of all possible `red' tableaux in Definition \ref{def:Lgrdtab}.  We will refer to them as \textbf{red tableaux} throughout this section.

\begin{definition}
Define a \textbf{$180^\circ$-rotated SYT} to be the result of rotating a standard Young tableaux $180^\circ$ in the plane.  We write $\mathrm{SYT}^{180^\circ}(g,r)$ for the set of all $180^\circ$-rotated SYT of size $g$ and height $\le r+1$.  

We informally call such a tableau a \textbf{purple tableau}, as it will be used as an intermediate object relating the red and blue tableaux of Definition \ref{def:Lgrdtab}.
\end{definition}

\begin{example}
    Below is an example of a purple tableau in $\mathrm{SYT}^{180}(7,3)$.
    
  $$\color{violet}
    \begin{ytableau}
      7 & 3 & 1 \\
      \none & 6 & 2 \\
      \none & \none & 4 \\
      \none & \none & 5
    \end{ytableau}$$
\end{example}

Given a red tableau, note that each number $1,\ldots,g$ occurs once in every row except one. Relatedly, a purple tableau in the position of the blue tableau will have each number $1,\ldots,g$ in exactly one row.  This leads us to define a bijection between the two as follows.

\begin{definition}[Red to purple bijection] \label{def:varphi}  Let $R\in \TrSSYT(g,r)$ be a red tableau.  We define a $180^\circ$-rotated tableau $\varphi(R)$ in the upper right corner of a rectangle by the following iterative process.  We add boxes labeled $1,2,\ldots,g$ in order, where on the $i$th step we place a box labeled $i$ as far to the right as possible in the unique row that does \textbf{not} contain an $i$ in $R$.
\end{definition}

\begin{example}
If $R$ is the tableau at left below, $\varphi(R)$ is shown at right below.
  $$\begin{ytableau}
   \RED 2 & \RED 4 & \RED 5 & \RED 6\\
   \RED 1 & \RED 3 & \RED 4 & \RED 5 & \RED 7\\
   \RED 1 & \RED 2 & \RED 3 & \RED 5 & \RED 6 & \RED 7\\
   \RED 1 & \RED 2 & \RED 3 & \RED 4 & \RED 6 & \RED 7
  \end{ytableau}
  \hspace{1cm} 
  {\color{violet}
  \begin{ytableau}
   7 & 3 & 1 \\
   \none & 6 & 2 \\
   \none & \none & 4 \\
   \none & \none & 5
  \end{ytableau}}$$
  \label{ex:redpurp}
\end{example}

We now show $\varphi$ is a bijection.  We note that a generalized version of this map was shown to be a bijection in \cite{REINER19981} (see also Section \ref{sec:general}), but we include a direct proof here for the special case that we are considering, for the reader's convenience.

\begin{lemma}\label{lem:RedToPurple}
	The map $\varphi$ is a bijection from $\TrSSYT(g,r)$ to $\mathrm{SYT}^{180^\circ}(g,r)$ for all $g,r$.  Moreover, for any $R\in \TrSSYT(g,r)$, the shapes of $R$ and $\varphi(R)$ are complementary in an $(r+1)\times g$ rectangle.
\end{lemma}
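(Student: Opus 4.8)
The plan is to establish four things about $\varphi$: (1) that the output $\varphi(R)$ is a genuine $180^\circ$-rotated SYT sitting in the upper-right corner of the $(r+1)\times g$ rectangle, so that its shape is complementary to that of $R$; (2) that $\varphi$ lands in $\mathrm{SYT}^{180^\circ}(g,r)$, i.e. has height at most $r+1$; (3) injectivity; and (4) surjectivity. I expect step (1) to carry most of the content, since once the shape-complementarity and the $180^\circ$-rotated-standard property are pinned down, invertibility becomes essentially forced.

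First I would set up the bookkeeping. For a red tableau $R\in\TrSSYT(g,r)$, each of the numbers $1,\dots,g$ appears exactly $r$ times, and since columns of a transposed SSYT weakly increase (bottom to top) and rows strictly increase, the $r$ copies of $i$ in $R$ occupy $r$ of the $r+1$ rows; call the unique missing row $\rho_R(i)$. The map $\varphi$ places a box labeled $i$ into row $\rho_R(i)$, as far right as possible, for $i=1,\dots,g$ in order. I would prove by induction on $i$ the following invariant: after placing $1,\dots,i$, the union of the boxes of $R$ restricted to its first $i$ columns... — wait, more precisely, I want to track that at each stage the occupied cells of $\varphi$ together with a suitable ``staircase'' of $R$ exactly tile a rectangle. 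The cleanest statement: for each $i$, in each row $a$, the number of copies of the values $1,\dots,i$ appearing in row $a$ of $R$, plus the number of boxes labeled $\le i$ placed in row $a$ of $\varphi$ so far, equals exactly $i$. This is immediate by induction: at step $i$, value $i$ goes into $r$ rows of $R$ (bringing each of those from $i-1$ to $i$) and into $1$ row of $\varphi$, namely the missing row $\rho_R(i)$, which is precisely the row that did not get incremented on the $R$ side. Summing this invariant at $i=g$ shows each row of $R$ together with the corresponding row of $\varphi$ has exactly $g$ cells, which is the width of the rectangle — this is the complementarity claim, provided the shapes actually fit together as Young-type diagrams.

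The harder part is verifying that $\varphi(R)$, built by "place as far right as possible in the prescribed row," actually has the shape of a $180^\circ$-rotated partition (rows right-justified with weakly decreasing lengths as we go down, reading in the rotated sense) and that the filling is standard. For the shape: I would show that whenever value $i$ is placed in row $a$ of $\varphi$, every row strictly above $a$ (in the rotated picture) already contains a box with a smaller label, so row $a$'s new box does not stick out past the row above it; this should follow from the invariant above together with the fact that in a transposed SSYT, if row $a$ of $R$ is "missing" value $i$ while some higher row $a'$ is not missing $i$, then the relevant column counts force row $a'$ of $\varphi$ to already have been extended. For standardness of the rotated SYT: rows of $\varphi$ get labels in increasing order left-to-right reading toward the corner because we always append at the rightmost available spot and labels are added in increasing order $1,2,\dots,g$; columns are handled by the same monotonicity used for the shape. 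Once $\varphi(R)\in\mathrm{SYT}^{180^\circ}(g,r)$ with complementary shape is established, the height bound $\le r+1$ is automatic since it lives inside the $(r+1)\times g$ rectangle.

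For injectivity and surjectivity I would exhibit the inverse. Given a $180^\circ$-rotated SYT $P$ of size $g$ inside the $(r+1)\times g$ rectangle, its complementary shape in the rectangle is an ordinary (transposed-type) diagram $\mu$; I claim there is a unique way to fill $\mu$ with content $(r^g)$ making it a transposed SSYT with $\varphi$ of it equal to $P$, obtained by reversing the procedure: process $i=g,g-1,\dots,1$, and at step $i$, the row of $P$ containing the label $i$ tells you which row of the red tableau is missing $i$; place an $i$ in every other row, in the unique legal (leftmost-available) cell. One then checks this produces a valid transposed SSYT — strict rows and weakly increasing columns — using that $P$ was standard, and that applying $\varphi$ recovers $P$. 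Uniqueness of the fill gives injectivity of $\varphi$ from the other side. I expect the main obstacle to be the careful verification in step (1) that "rightmost available in the given row" never violates the staircase condition of a $180^\circ$-rotated partition — this is where the interplay between the strict-row/weak-column conditions on $R$ and the placement rule has to be used precisely, and it is the crux on which both well-definedness and the shape-complementarity rest.
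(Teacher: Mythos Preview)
Your proposal is correct and follows the same inductive strategy as the paper's proof. The paper packages your row-count invariant and shape verification into a single geometric step: it inducts on $g$, removes the vertical strip of $g$'s from $R$, shifts $\varphi(R')$ one unit to the right inside the $(r+1)\times g$ rectangle, and observes that the $r+1$ vacated cells form a vertical strip $V$ in which the $r$ copies of $g$ must occupy the bottoms of the columns of $V$, so the one remaining cell (necessarily at the top of its column of $V$) is exactly where the label $g$ lands in $\varphi(R)$---this simultaneously yields well-definedness, complementarity, and bijectivity without separately building the inverse.
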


\begin{proof}
We show both statements by induction on $g$. For $g=1$, the tableau $R$ must be a column of $1$'s of height $r$, and $\varphi(R)$ is a single $1$ in the top row. They are clearly complementary in an $(r+1)\times 1$ rectangle (column).  

For $g>1$, assume the statements hold for $g-1$ and let $R\in \TrSSYT(g,r)$.  Consider the tableau $R'$ formed by removing the vertical strip of $g$'s from $R$.  Let $T'=\varphi(R')$.  Then $T'$ and $R'$ are complementary in an $(r+1)\times (g-1)$ rectangle by the inductive hypothesis.  

By shifting $T'$ one unit to the right, we form an empty vertical strip $V$ of size $r+1$ between the two tableaux.  Then all $r$ of the $g$'s in $R$ must lie in this strip, and in fact the one remaining square $x$ must be at the top of some column of $V$.  Then, the square $x$ is precisely the one that we label $g$ to form $T=\varphi(R)$ starting from $T'$, by Definition \ref{def:varphi}. Since the entries immediately above and to the right of $x$ will have entries smaller than $g$ (or $x$ is on the right hand or top edge of the rectangle), this construction forms a $180^\circ$-rotated SYT $T$, so $\varphi$ is well-defined and the resulting pair $(R,\varphi(R))$ is complementary. 

Finally, note that by the induction hypothesis, $\varphi$ is a bijection for $g-1$, and the possible squares we can add to $T'$ to form $g$ are precisely in bijection with the possible sub-strips of $g$'s of the vertical strip $V$ that we may add to $R'$ to form $R$.  Thus $\varphi$ is a bijection for size $g$ as well.
\end{proof}

We now make precise the notion of a ``blue tableau'' (see Definition \ref{def:Lgrdtab}).

\begin{definition}
    Define a \textbf{$180^\circ$-rotated semistandard tableau}, or \textbf{blue tableau} (with parameters $r,g$), to be a filling of a $180^\circ$-rotated Young diagram of size $g$ with numbers $0,1,2,\ldots,r$ such that the rows are weakly increasing from left to right and columns are strictly increasing from bottom to top.
\end{definition}

\begin{example}
    Below is an example of a blue tableau. It has the same shape as the purple tableau above in Example \ref{ex:redpurp}.
  $$\begin{ytableau}
   \BLUE 0 & \BLUE 2 & \BLUE 3 \\
   \none & \BLUE 1 & \BLUE 2 \\
   \none & \none & \BLUE 1 \\
   \none & \none & \BLUE 0
  \end{ytableau}$$
\end{example}

\begin{lemma}\label{lem:BluePurpleRSK}
	The pairs of blue and purple tableaux of the same shape correspond to $(r+1)$-ary sequences of length $g$ bijectively, via inverting the entries of the blue tableau (that is, replacing each entry $i$ by $r-i$), rotating both $180^\circ$, and applying RSK.
\end{lemma}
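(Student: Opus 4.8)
The plan is to set up a chain of elementary bijections and then invoke the RSK correspondence for words (Proposition~\ref{prop:RSK}). Starting from a pair $(B, T)$ of a blue tableau $B$ and a purple tableau $T$ of the same $180^\circ$-rotated shape $\nu$ of size $g$ with height $\le r+1$, I would first rotate the whole picture by $180^\circ$. Rotating $T$ turns the $180^\circ$-rotated SYT back into an honest standard Young tableau $Q$ of some straight shape $\lambda$ (the $180^\circ$-rotation of $\nu$), and rotating $B$ turns it into a filling of the same shape $\lambda$ whose rows, read left-to-right, are now strictly decreasing down... more carefully: a $180^\circ$ rotation reverses both the order of rows and the order of entries within each row, and swaps the ``weakly increasing rows / strictly increasing columns'' condition for the reverse. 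The second step is to fix orientation: after rotating, replace each entry $c \in \{0,1,\dots,r\}$ of the rotated blue tableau by $r - c$ (this is the ``inverting the content'' operation referred to in the statement). The combined effect of ``rotate $180^\circ$ then complement entries by $c \mapsto r-c$'' is a standard involution that carries the blue tableau to a genuine semistandard Young tableau $P$ of shape $\lambda$ with entries in $\{0,1,\dots,r\}$.

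Once this normalization is done, $(P, Q)$ is exactly a pair in $B(r,g)$ from Proposition~\ref{prop:RSK}: $P$ is an SSYT with letters in $\{0,1,\dots,r\}$, $Q$ is an SYT, and they share a common shape $\lambda$ of size $g$, with the height bound $\le r+1$ being automatic (and matching the remark that pairs in $B(r,g)$ have height at most $r+1$). Applying the inverse RSK map of Proposition~\ref{prop:RSK} sends $(P,Q)$ to an $(r+1)$-ary sequence of length $g$. So the full map is: $(B,T) \mapsto$ (rotate both $180^\circ$) $\mapsto$ (complement the content of the rotated blue tableau) $\mapsto (P,Q) \mapsto \mathrm{RSK}^{-1}(P,Q)$, a sequence in $A(r,g)$. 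Since each step is a bijection --- $180^\circ$ rotation is an involution, content-complementation $c \mapsto r-c$ is an involution, and RSK is a bijection by Proposition~\ref{prop:RSK} --- the composite is a bijection, which is the claim.

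The one genuinely non-routine point, and the step I would write out most carefully, is verifying that ``rotate $180^\circ$ then complement entries'' really does convert a $180^\circ$-rotated semistandard tableau (weakly increasing rows, strictly increasing columns, drawn in the rotated orientation) into an honest SSYT in the straight orientation. One has to check the interaction of the three operations on the two inequality conditions: rotation sends a box in row $i$ (from bottom), column $j$ (from left) of the rotated diagram to row $h+1-i$, column $w+1-j$ of the straight diagram (where $h,w$ bound the rectangle), it reverses the direction of every comparison along rows and along columns, and the complement $c \mapsto r-c$ is order-reversing; the composition of two order-reversals is order-preserving, and one must confirm the ``weak/strict'' labels land on ``rows/columns'' correctly so that the output is semistandard rather than transposed-semistandard. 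This is a short but fiddly bookkeeping argument; everything else is immediate from the definitions and from Proposition~\ref{prop:RSK}. I would also remark that this lemma is the $r$-dependent analogue of the classical observation that RSK on a word of length $g$ over an $(r+1)$-letter alphabet produces a pair $(P,Q)$ of the same shape, and that the blue/purple framework is precisely engineered so that this well-known fact applies verbatim after the normalization.
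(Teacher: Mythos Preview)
Your proposal is correct and follows essentially the same approach as the paper: rotate both tableaux $180^\circ$, complement the blue entries via $c\mapsto r-c$ to obtain a genuine SSYT $P$, pair it with the SYT $Q$ coming from the purple tableau, and invoke Proposition~\ref{prop:RSK}. The paper simply asserts that the rotated and complemented blue tableau is semistandard, whereas you spell out the order-reversal bookkeeping; otherwise the arguments are identical.
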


\begin{proof}
	Given a blue tableau $S$ and purple tableau $T$ of the same shape,	rotate both by 180 degrees to form $S^{180}$ and $T^{180}$. Then, $T^{180}$ is a standard tableau, which we call $Q$. We also form a semistandard tableau $P$ out of $S^{180}$ by inverting its entries;
	that is, we replace each $i$ in $S^{180}$ with $r-i$ in $P$.  
	
	Then, $(P,Q)$ is a pair in $B(r,g)$ (see Proposition \ref{prop:RSK}), so by the RSK bijection on words, we have a bijection between these pairs $(P,Q)$ and $A(r,g)$, which is precisely the set of $(r+1)$-ary sequences of length $g$.
\end{proof}

\begin{example}
  Consider the pair of blue and purple tableaux below.
  $$
  \begin{ytableau}
   \BLUE 0 & \BLUE 2 & \BLUE 3 \\
   \none & \BLUE 1 & \BLUE 2 \\
   \none & \none & \BLUE 1 \\
   \none & \none & \BLUE 0
  \end{ytableau}
  \hspace{1cm} 
  {\color{violet}
  \begin{ytableau}
   7 & 3 & 1 \\
   \none & 6 & 2 \\
   \none & \none & 4 \\
   \none & \none & 5
  \end{ytableau}}$$
 The corresponding pair $(P,Q)$ is as follows:
  $$
  \begin{ytableau}
   3 \\
   2 \\
   1 & 2 \\
   0 & 1 & 3
  \end{ytableau}
  \hspace{1cm}
  \begin{ytableau}
   5 \\
   4 \\
   2 & 6 \\
   1 & 3 & 7
  \end{ytableau}$$
  Then, via RSK, this pair corresponds to the $(r+1)$-ary sequence $3,2,2,1,0,1,3$ where $r=3$.
\end{example}

We finally have the tools to produce a bijection between $L$-tableaux and $(r+1)$-ary sequences.

\begin{proposition} \label{prop:bijection}
 The $L$-tableaux with parameters $(g,r,g+r)$ are in bijection with the $(r+1)$-ary sequences of length $g$ (with letters from the alphabet $\{0,1,2,\ldots,r\}$).
\end{proposition}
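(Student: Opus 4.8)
The plan is to chain together the three bijections already established. Starting from an $L$-tableau with parameters $(g,r,g+r)$, which by Definition \ref{def:Lgrdtab} consists of a red tableau $R \in \TrSSYT(g,r)$ together with a blue semistandard tableau $S$ on the complementary skew shape inside the $(r+1)\times g$ rectangle, I would first apply the red-to-purple bijection $\varphi$ of Definition \ref{def:varphi} to $R$. By Lemma \ref{lem:RedToPurple}, $\varphi(R)$ is a $180^\circ$-rotated SYT of size $g$ and height $\le r+1$, and crucially its shape is complementary to that of $R$ in the $(r+1)\times g$ rectangle. Since the blue tableau $S$ occupies exactly that complementary skew shape, $S$ and $\varphi(R)$ have the \emph{same} shape, so they form a valid pair of blue and purple tableaux in the sense of Lemma \ref{lem:BluePurpleRSK}.

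Next I would invoke Lemma \ref{lem:BluePurpleRSK} directly: this same-shape pair $(S, \varphi(R))$ of a blue and a purple tableau corresponds bijectively to an $(r+1)$-ary sequence of length $g$, via inverting the content of the blue tableau, rotating both $180^\circ$, and applying the RSK correspondence of Proposition \ref{prop:RSK}. Composing $\varphi$ (on the red part) with the map of Lemma \ref{lem:BluePurpleRSK} thus yields a well-defined map from $L$-tableaux with parameters $(g,r,g+r)$ to $(r+1)$-ary sequences of length $g$.

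To check that the composition is a bijection, I would argue that each step is invertible and the intermediate objects match up correctly. The map of Lemma \ref{lem:BluePurpleRSK} is a bijection from $\{$same-shape pairs (blue, purple)$\}$ to $(r+1)$-ary sequences, so given such a sequence we recover a unique pair $(S, T)$ with $T$ a purple tableau and $S$ a blue tableau of the same shape as $T$. Applying $\varphi^{-1}$ (which exists by Lemma \ref{lem:RedToPurple}) to $T$ gives a red tableau $R = \varphi^{-1}(T)$, and since the shape of $R$ is complementary to the shape of $T = \varphi(R)$, which equals the shape of $S$, the pair $(R, S)$ reassembles into a legitimate $L$-tableau with parameters $(g,r,g+r)$. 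This shows the composition has a two-sided inverse.

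The only real subtlety — and the step I would be most careful about — is the shape-matching bookkeeping: one must confirm that the skew shape occupied by the blue tableau in an $L$-tableau is precisely the $180^\circ$-rotated straight shape that $\varphi(R)$ lives on, so that "blue tableau on the complementary skew shape of $R$" and "blue tableau of the same shape as the purple tableau $\varphi(R)$" are genuinely the same condition (including that blue entries lie in $\{0,1,\dots,r\}$ with the correct row/column monotonicity under the rotation). This is exactly the content of the complementarity clause in Lemma \ref{lem:RedToPurple}, so once that is cited the argument is immediate; I would spell out the orientation conventions explicitly to avoid any off-by-a-rotation confusion. An example illustrating the full composition (reusing the running example from Example \ref{ex:redpurp}) would make the bijection concrete.
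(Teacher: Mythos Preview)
Your proposal is correct and follows exactly the paper's approach: combine Lemma \ref{lem:RedToPurple} (red $\leftrightarrow$ purple, with complementary shapes) with Lemma \ref{lem:BluePurpleRSK} (same-shape blue/purple pairs $\leftrightarrow$ $(r+1)$-ary sequences via RSK). The paper's own proof is just a two-sentence invocation of these two lemmas, and your extra care with the shape-matching bookkeeping and invertibility is well placed but not strictly needed beyond citing the complementarity clause of Lemma \ref{lem:RedToPurple}.
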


\begin{proof}
	Each such $L$-tableaux consists of a red tableau and a blue tableau. The bijection follows from combining Lemma \ref{lem:RedToPurple} with Lemma \ref{lem:BluePurpleRSK}, which provide bijections between red tableaux with purple tableaux, and between pairs of blue and purple tableaux with $(r+1)$-ary sequences of length $g$, respectively.
\end{proof}

\begin{example}
    Below is an $L$-tableau with parameters $(7,3,10)$. From our previous examples, we see that it corresponds to the $(r+1)$-ary sequence $3,2,2,1,0,1,3$.

    $$
    \begin{ytableau}
        \RED 2 & \RED 4 & \RED 5 & \RED 6 & \BLUE 0 & \BLUE 2 & \BLUE 3 \\
        \RED 1 & \RED 3 & \RED 4 & \RED 5 & \RED 7 & \BLUE 1 & \BLUE 2 \\
        \RED 1 & \RED 2 & \RED 3 & \RED 5 & \RED 6 & \RED 7 & \BLUE 1 \\
        \RED 1 & \RED 2 & \RED 3 & \RED 5 & \RED 6 & \RED 7 & \BLUE 0 \\
    \end{ytableau}
    $$
\end{example}

There are $(r+1)^g$ sequences of length $g$ in the alphabet $0,1,2,\ldots,r$.  Combining Proposition \ref{prop:bijection} with truncation (Lemma \ref{lem:chop}), we get as a corollary Theorem \ref{thm:main1}.

\begin{MainThm}
The number of $L$-tableaux with parameters $(g,r,d)$ is $(r+1)^g$ for all $d\ge r+g$.
\end{MainThm}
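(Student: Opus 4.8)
The plan is to assemble Theorem \ref{thm:main1} directly from the pieces already in place, so there is almost nothing new to prove. First I would invoke the Truncation Lemma (Lemma \ref{lem:chop}): for any $g,r,d$ with $d\ge g+r$, the number of $L$-tableaux with parameters $(g,r,d)$ equals the number with parameters $(g,r,g+r)$. This reduces the general statement to the single case $d=g+r$. Next I would apply Proposition \ref{prop:bijection}, which gives an explicit bijection between the $L$-tableaux with parameters $(g,r,g+r)$ and the $(r+1)$-ary sequences of length $g$, i.e.\ the elements of $\{0,1,\ldots,r\}^g$. Finally, counting those sequences is immediate: there are exactly $(r+1)^g$ of them. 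Chaining these three facts yields $\#\{L\text{-tableaux with parameters }(g,r,d)\} = \#\{L\text{-tableaux with parameters }(g,r,g+r)\} = \#\{0,1,\ldots,r\}^g = (r+1)^g$ for every $d\ge g+r$, which is exactly the claimed statement.

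The only real content behind this chain is hidden in Proposition \ref{prop:bijection}, which in turn rests on Lemma \ref{lem:RedToPurple} (the red-to-purple bijection $\varphi$) and Lemma \ref{lem:BluePurpleRSK} (pairs of blue and purple tableaux of the same shape $\leftrightarrow$ $(r+1)$-ary sequences via content-inversion, $180^\circ$-rotation, and RSK). Since those are established earlier in the excerpt, I would treat them as black boxes here. The one subtlety worth a sentence is that the shape compatibility lines up correctly: Lemma \ref{lem:RedToPurple} guarantees that $\varphi(R)$ has shape complementary to $R$ inside the $(r+1)\times g$ rectangle, which is exactly the skew shape that the blue tableau must occupy when $d=g+r$ (noting $d-r=g$), so a blue tableau on that skew shape and the purple tableau $\varphi(R)$ automatically have the same (rotated) shape and can be fed into Lemma \ref{lem:BluePurpleRSK}. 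I would make this observation explicit so the reader sees the two bijections genuinely compose.

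I do not anticipate a genuine obstacle, since all the hard work — the combinatorial definition of $\varphi$, the induction proving it is a shape-complementary bijection, and the RSK identification — has been done in the preceding lemmas. If there is any risk, it is purely bookkeeping: making sure the hypothesis $d\ge g+r$ in Theorem \ref{thm:main1} matches the hypothesis $d\ge g+r$ in Lemma \ref{lem:chop} (it does), and that "for all $d\ge r+g$" in the restated theorem is the same condition as "whenever $d\ge g+r$" in the original statement (it is, since addition is commutative). So the proof is essentially a one-line corollary, and I would present it as such: apply Lemma \ref{lem:chop}, then Proposition \ref{prop:bijection}, then count.

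\begin{proof}
By Lemma \ref{lem:chop}, for any $d\ge g+r$ the number of $L$-tableaux with parameters $(g,r,d)$ equals the number of $L$-tableaux with parameters $(g,r,g+r)$. By Proposition \ref{prop:bijection}, the latter are in bijection with the $(r+1)$-ary sequences of length $g$, of which there are exactly $(r+1)^g$. Combining these, the number of $L$-tableaux with parameters $(g,r,d)$ is $(r+1)^g$ for all $d\ge g+r$.
\end{proof}
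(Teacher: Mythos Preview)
Your proposal is correct and matches the paper's own argument exactly: the paper states the theorem as a corollary obtained by combining Proposition \ref{prop:bijection} with the Truncation Lemma (Lemma \ref{lem:chop}) and then counting the $(r+1)^g$ sequences. Your additional remark about shape compatibility is a nice clarification but is already implicit in the proof of Proposition \ref{prop:bijection}.
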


We now analyze two special cases of our construction.

\subsection{The case $r=1$}\label{sec:r=1}
We claim that at $r=1$, the composition of bijections discussed above reduces to ordinary RSK.   Indeed, in this case, the red tableau is simply a standard Young tableau on the numbers $1,2,\ldots,g$ of height $2$. The blue tableau consists of $0$'s and $1$'s, and when rotated 180 degrees is the same shape as the red tableau (and is semistandard after interchanging $0$'s and $1$'s).  More precisely, we have the following.

\begin{proposition}
  When $r=1$, the bijection $\varphi$ (Definition \ref{def:varphi}) from red to purple tableaux reduces to $180^\circ$ rotation.
\end{proposition}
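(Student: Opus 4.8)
The plan is to show directly that for every red tableau $R\in\TrSSYT(g,1)$, the purple tableau $\varphi(R)$ produced by the iterative procedure of Definition \ref{def:varphi} is exactly the $180^\circ$ rotation of $R$ inside a $2\times g$ rectangle. First I would record the elementary observation that, when $r=1$, the set $\TrSSYT(g,1)$ is precisely the set of standard Young tableaux of size $g$ and height at most $2$: the content $(1^g)$ forces all entries to be distinct, so ``weakly increasing columns'' becomes ``strictly increasing columns,'' and a transposed SSYT with distinct entries is the same object as an SYT. In particular $R^{180^\circ}$ is a genuine $180^\circ$-rotated SYT, and by Lemma \ref{lem:RedToPurple} the shape of $\varphi(R)$ is the complement of the shape of $R$ in the $2\times g$ box, which is also the shape of $R^{180^\circ}$; so it only remains to check that the two fillings agree cell by cell.

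Next I would fix coordinates, indexing the rows of the $2\times g$ rectangle as row $1$ (bottom) and row $2$ (top) and the columns $1,\dots,g$ from the left, so that $180^\circ$ rotation sends a box $(a,c)$ to $(3-a,\,g+1-c)$, with $R$ occupying the bottom-left and the purple tableau the top-right. Then I would verify that $R^{180^\circ}$ is built by exactly the rule of Definition \ref{def:varphi}, i.e. that for each $i$ the box labeled $i$ in $R^{180^\circ}$ lies (a) in the unique row not containing $i$ in $R$, and (b) as far to the right as possible. Part (a) is immediate: if $i$ occupies row $a$ of $R$ then it occupies row $3-a$ of $R^{180^\circ}$, and row $3-a$ of $R$ contains no $i$. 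For part (b) I would use that each row of a standard $R$ increases from left to right: if $i$ is the $c$-th box of its row in $R$, then exactly $c-1$ of the entries $1,\dots,i-1$ lie in that same row, and after rotation these become the $c-1$ boxes strictly to the right of box $i$ in its row of $R^{180^\circ}$, occupying columns $g,g-1,\dots,g-c+2$; hence box $i$ lands in column $g-c+1$, which is the rightmost as-yet-unoccupied cell of that row. Since the procedure defining $\varphi$ is deterministic, this forces $\varphi(R)=R^{180^\circ}$.

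The one point needing a little care, and the step I expect to be the only real obstacle, is the meaning of ``as far right as possible'': it should be read as the rightmost cell that keeps the partial shape a valid $180^\circ$-rotated Young diagram, and I would check that when $R^{180^\circ}$ is assembled in the order of its entries this legality constraint is never the binding one. This is where standardness of $R$ re-enters: for every $i$ the number of entries $\le i$ in the bottom row of $R$ is at least the number in the top row (each top-row box sits above a strictly smaller bottom-row box), so as the boxes of $R^{180^\circ}$ are adjoined in increasing order the top row of the growing shape stays at least as long as the bottom row, and each new box can legitimately be placed in the rightmost free cell of its row. Everything else is a routine unwinding of the definitions. I would close by remarking that, together with Lemma \ref{lem:BluePurpleRSK}, this identifies the full composite bijection of Proposition \ref{prop:bijection} with ordinary RSK at $r=1$, up to the cosmetic $180^\circ$ rotations and the $0\leftrightarrow 1$ swap on the blue tableau.
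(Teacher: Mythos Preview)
Your proposal is correct. The paper does not actually supply a formal proof of this proposition: it states the result and then only illustrates it with a worked example, leaving the verification to the reader. Your argument is therefore strictly more than what the paper provides, and your careful handling of the ambiguity in ``as far right as possible'' (rightmost unoccupied cell versus rightmost cell preserving a valid rotated shape), together with the check via standardness of $R$ that the top row of the growing purple shape always weakly dominates the bottom row, fills in precisely the details the paper leaves implicit.
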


We illustrate this with an example.  Consider the $L$-tableau with parameters $(5,1,6)$ below:
$$\begin{ytableau}
 \RED 3 & \BLUE 0 & \BLUE 1 & \BLUE 1 & \BLUE 1 \\
 \RED 1 & \RED 2 & \RED 4 & \RED 5 & \BLUE 0
\end{ytableau}$$
The bijection $\varphi$ applied to the red tableau above gives:
$$\begin{ytableau}
 \RED 3   \\
 \RED 1 & \RED 2 & \RED 4 & \RED 5
\end{ytableau}
\hspace{1cm}
{\color{violet}
\begin{ytableau}
 5 & 4 & 2 & 1 \\
 \none & \none & \none & 3
\end{ytableau}}$$
and combining the purple tableau on the right with the blue (after $180^\circ$ rotation and switching $1$'s and $0$'s gives the pair:
$$\begin{ytableau}
 \BLUE 1   \\
 \BLUE 0 & \BLUE 0 & \BLUE 0 & \BLUE 1
\end{ytableau}
\hspace{1cm}
{\color{violet}
\begin{ytableau}
 3 \\
 1 & 2 & 4 & 5
\end{ytableau}}
$$
which corresponds under RSK to the binary sequence $0,1,0,0,1$.

\subsection{The Castelnuovo case}\label{sec:Castelnuovo}
In \cite{FarkasLian}, the authors consider another special case, when $d=r+\frac{rg}{r+1}$, and show that their formula \begin{equation*}
    L_{g,r,d}=\int_{\Gr(r+1,d+1)} \sigma_{1^r}^g\cdot \left[ \sum_{\alpha_0+\cdots+\alpha_r=(r+1)(d-r)-rg}\left(\prod_{i=0}^r \sigma_{\alpha_i}\right)\right]
\end{equation*} holds in this case as well.  In fact, by construction we have $(r+1)(d-r)-rg=0$ and so the integral above reduces to the simple product $\sigma_{1^r}^g$ in the cohomology ring of the Grassmannian.   

Due to work of Castelnuovo \cite{Castelnuovo} and Griffiths and Harris \cite{Griffiths1980}, it is known that this quantity equals 
$$g!\cdot \frac{1!\cdot 2!\cdots r!}{s!\cdot (s+1)!\cdot \cdot \cdots \cdot (s+r)!}$$
where $s=\frac{g}{r+1}$ (which must be an integer since $\frac{rg}{r+1}=d-r$ is an integer and $r$ and $r+1$ are relatively prime).  We observe here how this may be enumerated directly via a variant of $L$-tableaux, using our `red to purple' bijection $\varphi$ of Definition \ref{def:varphi}.

Indeed, the integral $\int_{\Gr(r+1,d+1)}\sigma_{1^r}^g$  is the coefficient of $s_{((d-r)^{r+1})}$ in the product $s_{(1^r)}^g$, which by Corollary \ref{cor:iPieri} is the number of transposed SSYT's having shape a $(r+1)\times (d-r)$ rectangle and exactly $r$ of each letter $1,2,\ldots,g$.  Note that $d-r=\frac{rg}{r+1}$ so the entire rectangle has $$(r+1)\cdot \frac{rg}{r+1}=rg$$ boxes, and therefore it is completely filled by such a transposed tableau.  In other words, we are counting the number of `red' tableaux that precisely fill an $(r+1)\times rs$ rectangle where $s=\frac{g}{r+1}$.  Note that such tableaux exist if and only if $r+1$ divides $g$.

\begin{proposition}
    The number of transposed SSYT's of $(r+1)\times rs$ rectangle shape (where $s=\frac{g}{r+1}$) and content $(r^g)$ is equal to \begin{equation}\label{eq:Castelnuovo}g!\cdot \frac{1!\cdot 2!\cdots r!}{s!\cdot (s+1)!\cdot \cdot \cdots \cdot (s+r)!}.\end{equation}
\end{proposition}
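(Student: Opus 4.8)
The plan is to apply the red-to-purple bijection $\varphi$ of Definition~\ref{def:varphi} to the red tableaux that completely fill the rectangle, and then to evaluate the resulting count of standard Young tableaux of a rectangular shape by the hook length formula. Throughout we assume that $r+1$ divides $g$, so that $s = g/(r+1)$ is a positive integer (equivalently, $rs = d-r$ is an integer); as noted above, no transposed SSYT of the stated shape and content exists otherwise.

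A transposed SSYT of shape the $(r+1)\times rs$ rectangle with content $(r^g)$ is precisely an element of $\TrSSYT(g,r)$ whose shape is that rectangle. By Lemma~\ref{lem:RedToPurple}, $\varphi$ is a bijection from $\TrSSYT(g,r)$ onto $\mathrm{SYT}^{180^\circ}(g,r)$ under which the shape of $\varphi(R)$ is complementary to the shape of $R$ inside the $(r+1)\times g$ rectangle. Restricting the domain to the single shape consisting of the full $(r+1)\times rs$ rectangle therefore restricts the codomain to the purple tableaux of the complementary shape. Since $g - rs = s$, that complement has $s$ boxes in each of its $r+1$ rows, i.e.\ it is the $180^\circ$-rotation of the $(r+1)\times s$ rectangle. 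Undoing the $180^\circ$ rotation turns each such purple tableau, bijectively, into an ordinary standard Young tableau of the $(r+1)\times s$ rectangle. Hence the quantity to be computed equals $f^{(s^{r+1})}$, the number of standard Young tableaux of a rectangle with $r+1$ rows and $s$ columns.

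It then remains to compute $f^{(s^{r+1})}$ by the hook length formula. The multiset of hook lengths of a $p\times q$ rectangle is $\{a+b+1 : 0\le a\le p-1,\ 0\le b\le q-1\}$, so the product of all hook lengths is
\[ \prod_{a=0}^{p-1}\prod_{b=0}^{q-1}(a+b+1) = \prod_{a=0}^{p-1}\frac{(a+q)!}{a!} = \frac{q!\,(q+1)!\cdots(q+p-1)!}{0!\,1!\cdots(p-1)!}. \]
Taking $p = r+1$ and $q = s$, so that $pq = (r+1)s = g$, the hook length formula gives
\[ f^{(s^{r+1})} = g!\cdot\frac{0!\,1!\cdots r!}{s!\,(s+1)!\cdots(s+r)!} = g!\cdot\frac{1!\,2!\cdots r!}{s!\,(s+1)!\cdots(s+r)!}, \]
using $0! = 1$, which is exactly \eqref{eq:Castelnuovo}. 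The one point requiring genuine care is the shape bookkeeping in the middle step: one must check, respecting the French-convention orientation and the $180^\circ$ rotations, that ``$R$ fills the $(r+1)\times rs$ rectangle'' corresponds under $\varphi$ precisely to ``$\varphi(R)$ has the shape of the $180^\circ$-rotated $(r+1)\times s$ rectangle.'' Once this identification is nailed down, the remaining steps are the standard hook-length evaluation for a rectangular shape.
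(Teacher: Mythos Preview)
Your proof is correct and follows exactly the same approach as the paper: apply the bijection $\varphi$ (via Lemma~\ref{lem:RedToPurple}) to identify the transposed SSYT's of the full $(r+1)\times rs$ rectangle with standard Young tableaux of the complementary $(r+1)\times s$ rectangle, and then evaluate by the hook length formula. Your version simply fills in the details the paper leaves implicit, namely the shape bookkeeping $g-rs=s$ and the explicit hook-length computation for a rectangle.
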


\begin{proof}
These tableaux, under the bijection $\varphi$, correspond precisely to the standard Young tableaux of rectangle shape $(r+1)\times s$.  The classical `hook length formula' (see \cite[Ch.\ 7]{Stanley}) then results in the formula \eqref{eq:Castelnuovo}.
\end{proof}

\begin{example}
Suppose $r=4$ and $g=10$, so $d=r+\frac{rg}{r+1}=12$ and $s=\frac{g}{r+1}=2$.  Then one of the rectangular transposed tableaux enumerating $L_{g,r,d}$ is shown in red on the left below.  Its image under $\varphi$ is a $180^\circ$-rotated SYT filling an $(r+1)\times s$ box as shown in purple on the right below.
$$\begin{ytableau}
 \RED 2 & \RED 3 & \RED 5& \RED 6 & \RED 7 & \RED 8& \RED 9 &\RED 10\\
 \RED 1 & \RED 3 & \RED 4& \RED 5 & \RED 7 & \RED 8& \RED 9 &\RED 10 \\
 \RED 1 & \RED 2 & \RED 4& \RED 5 & \RED 6 & \RED 7& \RED 9 &\RED 10\\
 \RED 1 & \RED 2 & \RED 3& \RED 4 & \RED 6 & \RED 7& \RED 8 &\RED 10\\
 \RED 1 & \RED 2 & \RED 3& \RED 4 & \RED 5 & \RED 6& \RED 8 &\RED 9 \\ 
\end{ytableau}
\hspace{2cm}
{\color{violet}
\begin{ytableau}
 4 & 1 \\
 6 & 2 \\
 8 & 3 \\
 9 & 5 \\
 10 & 7
\end{ytableau}}$$
\end{example}

\section{$L'$-tableaux and enumeration by $2^g$}\label{sec:gdk}

We give a tableau interpretation of $L'_{g,d,k}$ in this section, starting from equation \eqref{eq:Lprime}, and show that these tableaux are enumerated by $2^g$ to prove Theorem \ref{thm:main2}.

\subsection{The $L'$-tableaux}\label{sec:Lprime-interp} Recall that equation \eqref{eq:Lprime} states that if $k+g\le 2d+1$ and $2\le k\le d$:
$$L'_{g,d,k}=\int_{\Gr(2,d+1)} \sigma_1^g \sigma_{k-1}\left(\sum_{i+j=2d-g-k-1}\sigma_i\sigma_j\right)-\int_{\Gr(2,d)}\sigma_1^g \sigma_{k-2}\left(\sum_{i+j=2d-g-k-2} \sigma_i\sigma_j\right).$$

We first give an interpretation of the left hand integral in the equation above.  Recall that a \textbf{standard Young tableau} of size $g$ is an SSYT of size $g$ in which the numbers $1,2,\ldots,g$ are each used exactly once.

\begin{definition}
A \textbf{positive $L'$-tableau with parameters $(g,d,k)$} is a way of filling a $2\times (d-1)$ grid with:
\begin{itemize}
    \item A standard Young tableau of size $g$ in the lower left corner (shaded red),
    \item A shading of the $k-1$ rightmost boxes in the top row (gray),
    \item A skew SSYT in two letters $0,1$ on the remaining squares (blue).
\end{itemize}
\end{definition}

By rearranging so that we think of the $\sigma_{k-1}$ as last in each product, and applying Corollary \ref{cor:iPieri}, we see that the positive term in equation \eqref{eq:Lprime} is equal to the number of positive $L'$-tableaux.

\begin{example}
  Here is an example of a positive $L'$-tableau with parameters $(3, 7, 4)$.
\begin{center}
\begin{ytableau}
 \RED 3 & \BLUE 0 & \BLUE 1 & \GRAY & \GRAY & \GRAY   \\
 \RED 1 & \RED 2 & \BLUE 0 & \BLUE 0 & \BLUE 1 & \BLUE 1 
\end{ytableau}
\end{center}
\end{example}

The second term in \eqref{eq:Lprime}, which we are subtracting, is similarly given by a set of smaller tableaux that we call \textit{negative} tableaux.

\begin{definition}
    A \textbf{negative $L'$-tableau with parameters $(g,d,k)$}
is a filling of a $2\times (d-2)$ grid with:
\begin{itemize}
    \item A standard Young tableau of size $g$ in the lower left corner (shaded red),
    \item A shading of the $k-2$ rightmost boxes in the top row (gray),
    \item A skew SSYT in two letters $0,1$ on the remaining squares (blue).
\end{itemize}
\end{definition}

\begin{example}
Here is an example of a negative $L'$-tableau with parameters $(3,7,4)$.
\begin{center}
\begin{ytableau}
 \RED 3 & \BLUE 0 & \BLUE 1 & \GRAY & \GRAY   \\
 \RED 1 & \RED 2 & \BLUE 0 & \BLUE 0 & \BLUE 1  
\end{ytableau}
\end{center}
\end{example}

Notice that there exist positive $L'$-tableaux if and only if $(k-1)+g\le 2(d-1)$, which is slightly stronger than the given condition $k+g\le 2d+1$. In particular if $k+g=2d$ or $k+g=2d+1$ we have $L'_{g,d,k}=0$, so we restrict our attention to the case that $k+g\le 2d-1$.  

\subsection{Enumeration by $2^g$}

We now prove Theorem \ref{thm:main2}.

\begin{definition}
    For fixed $g,d,k$, write $L'_{+}$ and $L'_{-}$ for the set of positive and negative $L'$ tableaux respectively of type $(g,d,k)$.  Also write $\psi:L'_{-}\to L'_{+}$ for the map that takes a negative tableau $T$ and adds a blue $1$ to the end of the bottom row and a gray box to the end of the top row. 
\end{definition}

Our above analysis shows that $$L'_{g,d,k}=|L'_+|-|L'_{-}|,$$ and we analyze this difference combinatorially.  The definitions above directly show that $\psi$ is a well-defined injective map, and so \begin{equation}\label{eq:Lprime}L'_{g,d,k}=|L'_{+}\setminus \psi(L'_{-})|.\end{equation}  The following proposition characterizes the image $\psi(L'_{-})$. 

\begin{proposition}
  A positive tableau $T$ is equal to $\psi(S)$ for some negative tableaux $S$ if and only if the bottom row of $T$ contains a blue $1$.
\end{proposition}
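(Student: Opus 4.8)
The plan is to prove the two directions of the biconditional separately, both essentially by unwinding the definition of $\psi$.

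For the forward direction, suppose $T = \psi(S)$ for some negative tableau $S$. By definition, $\psi$ appends a blue $1$ to the end of the bottom row of $S$. Since $S$ already has a standard Young tableau (with entries $1,\ldots,g$) and possibly some blue entries in its bottom row, the appended box is strictly to the right of everything in $S$'s bottom row, and it carries the label $1$ (blue). Hence the bottom row of $T$ contains a blue $1$, namely in its last box. This direction is immediate.

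For the reverse direction, suppose $T$ is a positive $L'$-tableau whose bottom row contains a blue $1$. I would first argue that this blue $1$ must in fact be the \emph{rightmost} box of the bottom row of $T$. The bottom row of $T$ reads: red standard entries (weakly, in fact strictly increasing), then blue entries which are weakly increasing and drawn from $\{0,1\}$, so the blue part of the bottom row is a block of $0$'s followed by a block of $1$'s. Thus if any blue $1$ appears in the bottom row, the last box of the bottom row is a blue $1$. Now I would remove that last box from the bottom row, and correspondingly remove the last (gray) box from the top row — here I need to check that the last box of the top row really is gray, which follows because the top row of a positive $L'$-tableau, reading left to right, is: the (single) red entry if present, then blue entries, then the block of $k-1$ gray boxes at the far right; since the grid has width $d-1$ and there are $k-1 \ge 1$ gray boxes (using $k \ge 2$), the last box of the top row is gray. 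Removing these two boxes yields a filling $S$ of a $2\times(d-2)$ grid with the same red standard tableau of size $g$ (unchanged, since the removed bottom box was blue), with $k-2$ gray boxes at the end of the top row, and with a skew SSYT in $\{0,1\}$ on the remaining boxes — one needs only to observe that deleting a box from the end of a row of an SSYT (here the skew blue tableau) leaves an SSYT, and that the shape remains a valid skew shape inside the $2\times(d-2)$ grid. Hence $S$ is a negative $L'$-tableau and $\psi(S) = T$ by construction.

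The main obstacle, such as it is, is the bookkeeping in the reverse direction: one must verify that the box removed from the top row is the rightmost gray box (so that $S$ has exactly $k-2$ gray boxes), and that after removal the blue portion is still a legitimate skew SSYT occupying a skew shape that fits the smaller rectangle. These are all straightforward consequences of the row/column monotonicity conditions and the left/right-justification built into the definitions, but they should be spelled out. I would also note in passing the edge case where the bottom row of $T$ consists \emph{entirely} of the red standard tableau plus a single blue $1$ and no blue $0$'s — this is still fine, as the removal simply yields a negative tableau whose bottom row is all red. No genuinely hard step arises; the content is entirely in matching the definitions.
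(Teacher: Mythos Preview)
Your proposal is correct and follows essentially the same approach as the paper: both directions are handled exactly as you describe, with the reverse direction using semistandardness to conclude the bottom-rightmost entry is a blue $1$ and then removing the last column to recover $S$. The paper's proof is simply terser, omitting the bookkeeping checks (e.g., that the top-right box is gray) that you spell out.
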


\begin{proof}
By the definition of $\psi$, any tableau in its image has a blue $1$ on the bottom right.  Conversely, if the bottom row of $T$ contains a blue $1$, then by semistandardness of the blue tableau, the bottom-rightmost entry is a blue $1$ as well, and removing the last column of $T$ yields a negative tableau $S$ for which $\psi(S)=T$.
\end{proof}

Applying this proposition and equation \eqref{eq:Lprime}, we obtain the following combinatorial interpretation of $L'_{g,d,k}$.

\begin{corollary}
The quantity $L'_{g,d,k}$ is equal to the number of positive $L'$-tableaux with parameters $(g,d,k)$ for which the bottom row contains no blue $1$ (and hence the only blue numbers in the bottom row are $0$'s).
\end{corollary}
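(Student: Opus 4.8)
The plan is to obtain this as a purely formal consequence of the two results immediately preceding it, with essentially no new combinatorial input. Recall that the analysis above expressed $L'_{g,d,k}$ as the difference $|L'_{+}|-|L'_{-}|$, and that $\psi\colon L'_{-}\to L'_{+}$ is a well-defined injection. Injectivity gives $|\psi(L'_{-})|=|L'_{-}|$, so
$$
L'_{g,d,k}=|L'_{+}|-|\psi(L'_{-})|=|L'_{+}\setminus\psi(L'_{-})|.
$$

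Next I would apply the proposition characterizing the image of $\psi$: a positive $L'$-tableau $T$ lies in $\psi(L'_{-})$ precisely when its bottom row contains a blue $1$. Hence $L'_{+}\setminus\psi(L'_{-})$ is exactly the set of positive $L'$-tableaux with parameters $(g,d,k)$ whose bottom row contains no blue $1$, which is the claimed enumeration. The parenthetical assertion is then immediate: in a positive $L'$-tableau the gray boxes occur only in the top row, so the bottom row consists of the entries of the red standard tableau followed by blue entries, and since blue entries lie in $\{0,1\}$, the condition ``the bottom row contains no blue $1$'' is equivalent to ``every blue entry in the bottom row is a $0$.''

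I do not expect a genuine obstacle here, since all of the substantive work has already been carried out: the well-definedness and injectivity of $\psi$, and, most importantly, the \emph{exactness} of the characterization of $\psi(L'_{-})$, which is the one place semistandardness of the blue tableau enters (a blue $1$ anywhere in the bottom row forces the bottom-right entry to be a blue $1$, so that deleting the last column really does return a valid negative tableau). Once those facts are in hand, the corollary is just bookkeeping: $L'_{g,d,k}$ counts precisely the positive $L'$-tableaux that survive the inclusion–exclusion, namely exactly those not in the image of $\psi$.
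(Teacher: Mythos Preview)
Your proposal is correct and follows essentially the same approach as the paper: combine the identity $L'_{g,d,k}=|L'_{+}\setminus\psi(L'_{-})|$ with the proposition characterizing $\psi(L'_{-})$ as the positive tableaux having a blue $1$ in the bottom row. Your additional justification of the parenthetical remark is accurate and slightly more explicit than the paper's treatment.
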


For sufficiently large $d$, we can simplify this characterization even further. 

\begin{MainThm2}
If $d\ge g+k$, we have $L'_{g,d,k}=2^g$.
\end{MainThm2}

\begin{proof}
  Suppose $d\ge g+k$.  Then $d-1\ge g+(k-1)$, so the red and gray boxes of any positive $L'$-tableau with parameters $(g,d,k)$ cannot share a column.  In particular, for any positive tableau $T$ that has no blue $1$ in the bottom row, there are all blue $0$'s under the gray squares, and moreover any remaining columns to the right of the red tableau are uniquely determined (having one $0$ and one $1$) as well.  Thus the data determining $T$ is entirely contained in its first $g$ columns, which consists of a red standard tableau $Q$, and a binary tableau $P$ of the same shape as $Q$, where $P$ is obtained by rotating the blue numbers in these columns $180^\circ$ and replacing all $0$'s with $1$'s and $1$'s with $0$'s.
  
  By the RSK correspondence, these pairs $(P,Q)$ are precisely in bijection with the binary sequences of length $g$, and so we have that $L'_{g,d,k}=2^g$ as desired.
\end{proof}

\begin{example}
  The tableau below at left is a positive $L'$-tableau with parameters $(3,7,4)$ that is not the image of a negative one.  Since $g=3$, we restrict our attention to the first three columns (second image below), then consider the associated pair of tableaux of the same shape by rotating the blue tableaux and inverting the labels.  Finally, this pair corresponds under RSK to a unique length $3$ binary sequence. 
\begin{center}
\begin{ytableau}
 \RED 3 & \BLUE 0 & \BLUE 1 & \GRAY & \GRAY & \GRAY   \\
 \RED 1 & \RED 2 & \BLUE 0 & \BLUE 0 & \BLUE 0 & \BLUE 0 
\end{ytableau}\hspace{0.5cm}$\longrightarrow$\hspace{0.5cm}
\begin{ytableau}
 \RED 3 & \BLUE 0 & \BLUE 1   \\
 \RED 1 & \RED 2 & \BLUE 0  
\end{ytableau}\hspace{0.5cm}$\longrightarrow$\hspace{0.5cm}
$\left(\,\,\begin{ytableau}
 \RED 3   \\
 \RED 1 & \RED 2  
\end{ytableau}\,\,,\,\,
\begin{ytableau}
 \BLUE 1   \\
 \BLUE 0 & \BLUE 1  
\end{ytableau}\,\,\right)$\hspace{0.5cm}$\longrightarrow$\hspace{0.5cm}
110
\end{center}
\end{example}

\section{Further combinatorial observations}\label{sec:combinatorics}

In this section we provide two generalizations/observations regarding the combinatorics discussed above.  In particular we consider two variations of $L$-tableaux and explore their properties.

\subsection{Generalizing the map $\varphi$}
\label{sec:general}

We note that the `red to purple' bijection $\varphi$ may be generalized to transposed tableaux with $i$ of each entry (for any positive integer $i\le r$) as follows.

\begin{definition}
    Let $\mathrm{TrSSYT}(g,r,i)$ be the set of all transposed SSYT's of content $(i^g)=(i,i,\ldots,i)$ and height $\le r+1$.
\end{definition}

In particular, setting $i=r$ gives us the red tableaux defined in Definition \ref{def:red}, and setting $i=1$ gives us the set of standard Young tableaux of size $g$ and height $\le r+1$.

\begin{proposition}\label{prop:general}
There is a bijection $\varphi_i:\mathrm{TrSSYT}(g,r,i)\to \mathrm{TrSSYT}(g,r,r+1-i)$ for each $i$, that agrees with the bijection $\varphi$ of Definition \ref{def:varphi} at $i=r$ (up to a $180^\circ$ rotation of the output).
\end{proposition}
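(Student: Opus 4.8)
The plan is to mimic the inductive construction of $\varphi$ from Definition~\ref{def:varphi} and Lemma~\ref{lem:RedToPurple}, replacing the notion of ``the unique row missing an $i$'' with ``the set of rows missing a copy of a given letter.'' Given $R \in \mathrm{TrSSYT}(g,r,i)$, each letter $j \in \{1,\ldots,g\}$ occurs exactly $i$ times, hence is absent from exactly $(r+1)-i$ of the $r+1$ rows. So I would build $\varphi_i(R)$ one letter at a time: on step $j$, having already placed the letters $1,\ldots,j-1$, add $r+1-i$ new boxes labeled $j$, one in each of the $r+1-i$ rows of $R$ that do \emph{not} contain the letter $j$, placed as far to the right as possible in the complementary (rotated) diagram. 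The claim is that this produces a transposed SSYT of content $((r+1-i)^g)$ and height $\le r+1$, and that the map is a bijection.

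The key steps, in order: (1) Show $\varphi_i$ is well-defined — i.e., the boxes added on step $j$ actually form a valid vertical strip in the growing complementary shape, and the result after all $g$ steps is a genuine transposed SSYT (strictly increasing rows, weakly increasing columns). This follows the same complementary-shape bookkeeping as Lemma~\ref{lem:RedToPurple}: after removing all copies of $g$ from $R$ one gets $R' \in \mathrm{TrSSYT}(g-1,r,i)$, and by induction $\varphi_i(R')$ and $R'$ are complementary in an $(r+1)\times(g-1)$ rectangle; shifting $\varphi_i(R')$ right by one column opens a vertical strip of $r+1$ cells, the $i$ copies of $g$ in $R$ occupy $i$ of them (and must, by transposed-semistandardness, sit at the bottoms of their columns within the new column), and the remaining $r+1-i$ cells — which lie at the \emph{tops} of their columns — are exactly the cells labeled $g$ in $\varphi_i(R)$. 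Checking that these top cells give strictly increasing rows and weakly increasing columns when labeled $g$ is the same local argument as before. (2) Conclude that $R$ and $\varphi_i(R)$ are complementary in an $(r+1)\times g$ rectangle, so in particular $\varphi_i(R)$ has height $\le r+1$ and content $((r+1-i)^g)$, i.e.\ lies in $\mathrm{TrSSYT}(g,r,r+1-i)$. (3) Prove bijectivity by the same induction: the possible vertical sub-strips of size $i$ of the opened column of $V$ that can be adjoined to $R'$ to form $R$ are in bijection with the ways to extend $\varphi_i(R')$ by a column-strip of $g$'s, so the bijection for $g-1$ lifts to one for $g$. The base case $g=1$ is immediate: $R$ is a single column of $i$ ones, and $\varphi_i(R)$ is the column of $r+1-i$ ones forming the complement. (4) Finally, observe that at $i=r$ we have $r+1-i=1$, so $\varphi_r$ lands in the standard Young tableaux of size $g$ and height $\le r+1$, which after a $180^\circ$ rotation is exactly the purple tableau $\varphi(R)$ of Definition~\ref{def:varphi}; checking that the ``as far right as possible'' rule matches up under the rotation is a direct comparison.

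I expect the main obstacle to be step~(1): verifying that when we place the $r+1-i$ copies of $j$ simultaneously (rather than one at a time as when $i=r$), the resulting filling is still a valid transposed SSYT and that the complement-of-$R$ shape is respected at every stage. Concretely, one must rule out the possibility that two of the new $j$-boxes end up in the same column, or that a $j$-box is placed directly left of or below a smaller-labeled box in the rotated diagram; the resolution is that the $r+1-i$ rows missing $j$ in $R$ are precisely the rows where the complementary shape has ``room'' at the current frontier, and transposed-semistandardness of $R$ forces these frontier cells to lie in distinct columns in staircase order. Once this local structure is pinned down the induction runs exactly as in Lemma~\ref{lem:RedToPurple}, and the complementarity statement makes both well-definedness and injectivity/surjectivity fall out together.
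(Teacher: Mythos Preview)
Your proposal is correct and follows essentially the same approach as the paper: the paper likewise defines $\varphi_i$ by placing, at step $j$, boxes labeled $j$ in exactly the rows of $R$ missing a $j$ (as far right as possible in the rotated complementary diagram), then proves well-definedness, complementarity, and bijectivity by the same induction on $g$, removing the vertical strip of $g$'s from $R$, invoking the inductive hypothesis on $R'$, and shifting to create a vertical strip $V$ of size $r+1$ split between the $i$ copies of $g$ in $R$ (filling columns of $V$ from the bottom) and the $r+1-i$ new $g$'s in $T$. Your concern in step~(1) about ruling out two $j$-boxes landing in the same column or violating semistandardness is handled in the paper only by the phrase ``by similar reasoning to Lemma~\ref{lem:RedToPurple},'' so your more explicit discussion of why the frontier cells lie in staircase order is, if anything, more thorough than the paper's treatment.
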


See Figure \ref{fig:height-i} for an example of this map for $i=3$ and $r=4$.

In fact, the map $\varphi_i$ can be realized as a special case of an even more general map studied by Stanley \cite{Stan83}.  It was later studied by Reiner and Shimozono \cite{REINER19981}, who call the map the \textit{box complement} and study it on a generalization of partition diagrams called \textit{\%-avoiding shapes}.  Proposition \ref{prop:general} may be proven using similar methods to our proof of Lemma \ref{lem:RedToPurple}, but we simply refer to \cite{REINER19981} for an existing proof in the more general setting.

\begin{remark}
For $i=r$, the transposed SSYT's of height at most $r+1$ are in bijection with standard Young tableaux, and therefore may be enumerated by the hook length formula for any given shape. It would be interesting to investigate whether there is a hook-length-like formula enumerating transposed SSYT's with content $(i^g)$ and height at most $r+1$ for $1<i<r$.
\end{remark}

\begin{figure}
    \centering
    \begin{ytableau}
     \RED 2 & \RED 5  \\
     \RED 2 & \RED 4 & \RED 5 \\
     \RED 1 & \RED 3 & \RED 5 \\
     \RED 1 & \RED 3 & \RED 4\\
     \RED 1 & \RED 2 & \RED 3 & \RED 4\\
    \end{ytableau}
    \hspace{2cm}
    {\color{violet}
    \begin{ytableau}
    4 & 3 & 1 \\
    \none  & 3 & 1 \\
     \none & 4 & 2 \\
    \none & 5 & 2 \\
    \none & \none & 5 
    \end{ytableau}
    }
    \caption{A tableau in $\TrSSYT(5,4,3)$ and its image under $\varphi_3$ in $\TrSSYT(5,4,2)$.}
    \label{fig:height-i}
\end{figure}

\subsection{Restricting the alphabet}

We now ask whether our bijective constructions can lead to related interesting enumeration problems.  In particular, one natural variant we may consider is limiting the alphabet of the blue tableau (in the $L$-tableau setting) to a smaller size, so that under RSK we end up with words in a smaller alphabet.

\begin{definition}
	Define a \textbf{restricted $L$-tableau with parameters $(g,r,g+r,i)$} to be an $L$-tableau of paramters $(g,r,g+r)$ where we restrict the alphabet of the blue integers to $\{0,1,\ldots,r-i\}$.
\end{definition} 
Note that the parameter $g+r$ is redundant, and we simply include it for consistency with the parameter $d$ in our previous notation.  For larger $d$ there would be no restricted $L$-tableaux, because a full column of height $r+1$ cannot be filled by blue integers from $\{0,1,\ldots,r-i\}$ in a semistandard tableau.

It turns out that this restricted setting simply reduces to a smaller case of our usual $L$-tableaux.

\begin{proposition}
	The number of restricted $L$-tableaux with parameters $(g,r,g+r,i)$ is equal to the number of $L$-tableaux with parameters $(g,r-i,g+r)$.
\end{proposition}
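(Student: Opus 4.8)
The plan is to run exactly the chain of bijections from the proof of Proposition~\ref{prop:bijection}, observing that shrinking the blue alphabet forces the intermediate purple tableau to be shorter. First I would unpack the definition: a restricted $L$-tableau with parameters $(g,r,g+r,i)$ is a pair $(R,S)$ filling the $(r+1)\times g$ rectangle, where $R\in\TrSSYT(g,r)$ is the red tableau and $S$ is a blue SSYT on the complementary (top-right) skew shape with entries from $\{0,1,\ldots,r-i\}$. Applying the red-to-purple bijection $\varphi$ (Definition~\ref{def:varphi}, Lemma~\ref{lem:RedToPurple}) replaces $R$ by a purple tableau $T=\varphi(R)\in\mathrm{SYT}^{180^\circ}(g,r)$ occupying precisely the same cells as $S$. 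So a restricted $L$-tableau is equivalent to a pair $(T,S)$ with $T$ a $180^\circ$-rotated SYT of size $g$ and $S$ a same-shape filling with entries in $\{0,\ldots,r-i\}$, weakly increasing along rows and strictly increasing up columns --- that is, a blue tableau with parameters $(r-i,g)$.

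The key step is the height constraint. Because the columns of $S$ strictly increase using values drawn from a set of size $r-i+1$, the common shape of $T$ and $S$ has at most $r-i+1$ rows; conversely, any such $T$ admits at least one valid $S$. Hence the purple tableaux that arise are exactly the elements of $\mathrm{SYT}^{180^\circ}(g,r)$ of height $\le r-i+1$, which is by definition the set $\mathrm{SYT}^{180^\circ}(g,r-i)$. Now I would run the bijections of Proposition~\ref{prop:bijection} \emph{with $r$ replaced by $r-i$}, in reverse: applying $\varphi^{-1}$ for the parameter $r-i$ (Lemma~\ref{lem:RedToPurple} with $r-i$ in place of $r$) carries $\mathrm{SYT}^{180^\circ}(g,r-i)$ bijectively onto $\TrSSYT(g,r-i)$, so each pair $(T,S)$ corresponds to a red tableau in $\TrSSYT(g,r-i)$ together with a same-shape blue tableau with parameters $(r-i,g)$ --- in other words, to an $L$-tableau with parameters $(g,r-i,g+(r-i))$. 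Composing everything gives a bijection between restricted $L$-tableaux with parameters $(g,r,g+r,i)$ and $L$-tableaux with parameters $(g,r-i,g+(r-i))$.

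Finally, since $g+r\ge g+(r-i)$, the Truncation Lemma~\ref{lem:chop} (applied with $r-i$ in place of $r$) identifies the number of $L$-tableaux with parameters $(g,r-i,g+(r-i))$ with the number of $L$-tableaux with parameters $(g,r-i,g+r)$, which is the desired conclusion. As a consistency check one can also note that, by the same RSK chain, both counts equal $(r-i+1)^g$: restricted $L$-tableaux map bijectively to $(r-i+1)$-ary sequences of length $g$ just as in Proposition~\ref{prop:bijection}, and $L$-tableaux with parameters $(g,r-i,g+r)$ are enumerated by $(r-i+1)^g$ by Theorem~\ref{thm:main1}.

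I do not expect a genuine obstacle here: every ingredient is already available from Section~\ref{sec:grd}. The one point that must be stated carefully is the height argument --- that restricting the blue alphabet to $\{0,\ldots,r-i\}$ is exactly equivalent to forcing the purple tableau into $\mathrm{SYT}^{180^\circ}(g,r-i)$ --- since that is what lets the $\varphi$-based bijection land in $\TrSSYT(g,r-i)$ rather than merely in $\TrSSYT(g,r)$; the rest is bookkeeping plus one invocation of truncation.
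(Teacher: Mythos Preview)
Your argument is correct, but it takes a more elaborate route than the paper's. The paper bypasses $\varphi$ entirely: since the blue columns are strictly increasing with entries in $\{0,\ldots,r-i\}$, the blue tableau has height at most $r-i+1$, so the bottom $i$ rows of the $(r+1)\times g$ grid are entirely red. Each such row is strictly increasing with entries from $\{1,\ldots,g\}$ and has length $g$, hence is forced to be $1,2,\ldots,g$. Deleting these $i$ rows is then a direct bijection to $L$-tableaux with parameters $(g,r-i,g+(r-i))$, and one appeal to truncation finishes. Your proof reaches the same destination by applying $\varphi$ at parameter $r$, observing the height drop on the purple side, and then applying $\varphi^{-1}$ at parameter $r-i$. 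In fact the two bijections coincide as maps: if $R'$ denotes $R$ with its bottom $i$ rows removed, then for each $j$ the unique row of $R$ missing $j$ lies among the top $r-i+1$ rows, so $\varphi_r(R)$ and $\varphi_{r-i}(R')$ produce the same purple tableau. What you gain is the explicit tie-in to $(r-i+1)$-ary sequences via RSK; what the paper gains is a two-line proof that needs neither $\varphi$ nor RSK.
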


\begin{proof}
	By Lemma \ref{lem:chop} we may assume we are working with truncated tableaux. 
	Since the blue integers are restricted to the alphabet $\{0,1,\ldots,r-i\}$, the blue tableau has height at most $r-i$. So, the bottom $i$ rows of the red tableau have width $g$. These rows must be filled with each integer $1,\ldots,g$. As there is a unique way to do this, the act of removing the bottom $i$ rows of the $(r+1)\times(g)$ grid gives a bijection between the restricted $L$-tableaux with parameters $(g,r,d,i)$ and the $L$-tableaux with parameters $(g,r-i,d)$.
\end{proof}

Combining this proposition with Theorem \ref{thm:main1} yields the following enumerative result.

\begin{corollary}
There are exactly $(r-i+1)^g$ restricted $L$-tableaux with parameters $(g,r,g+r,i)$.
\end{corollary}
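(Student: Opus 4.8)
The statement to prove is the final corollary: there are exactly $(r-i+1)^g$ restricted $L$-tableaux with parameters $(g,r,g+r,i)$.

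The plan is to chain together two results already established in the excerpt. First I would invoke the immediately preceding proposition, which gives a bijection between restricted $L$-tableaux with parameters $(g,r,g+r,i)$ and ordinary $L$-tableaux with parameters $(g,r-i,g+r)$; this is the content-transfer step where one deletes the $i$ forced bottom rows of the red tableau. Since that proposition is available to be assumed, nothing new needs to be done here beyond citing it.

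Second, I would apply Theorem \ref{thm:main1} (the Main Theorem) to the parameters $(g,r-i,g+r)$. Theorem \ref{thm:main1} says the number of $L$-tableaux with parameters $(g,r',d)$ is $(r'+1)^g$ whenever $d \ge g+r'$. Here $r' = r-i$ and $d = g+r$, and the hypothesis $g+r \ge g + (r-i)$ holds automatically since $i \ge 1 \ge 0$ (indeed $i$ is a positive integer with $i \le r$, so $g + r \ge g + (r-i)$ with room to spare). Hence the count is $((r-i)+1)^g = (r-i+1)^g$.

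So the whole proof is: by the preceding proposition the two sets are equinumerous, and by Theorem \ref{thm:main1} (applied with the parameter $r$ replaced by $r-i$, noting the degree hypothesis $g+r \ge g+(r-i)$ is satisfied) the latter set has cardinality $(r-i+1)^g$. There is essentially no obstacle; the only thing to be mildly careful about is checking that the hypothesis of Theorem \ref{thm:main1} is met for the shifted parameters, and that the notation $(g,r-i,g+r)$ rather than $(g,r-i,g+(r-i))$ is harmless — which it is, precisely because Lemma \ref{lem:chop} (truncation) shows larger $d$ gives the same count. I would present this in two or three sentences rather than a displayed computation, since it is a direct composition of two named results.

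\begin{proof}
By the preceding proposition, the restricted $L$-tableaux with parameters $(g,r,g+r,i)$ are in bijection with the $L$-tableaux with parameters $(g,r-i,g+r)$. Since $i$ is a positive integer, we have $g+r \ge g+(r-i)$, so Theorem \ref{thm:main1} (applied with the role of $r$ played by $r-i$, and $d = g+r$) tells us that the number of such $L$-tableaux is $((r-i)+1)^g = (r-i+1)^g$. Combining these two facts gives the claim.
\end{proof}
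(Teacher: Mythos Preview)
Your proof is correct and matches the paper's approach exactly: the paper states this corollary as an immediate consequence of combining the preceding proposition with Theorem~\ref{thm:main1}, and that is precisely what you do. The only addition you make is the explicit verification of the degree hypothesis $g+r \ge g+(r-i)$, which is a welcome detail but not a different idea.
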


\bibliography{refs}
\bibliographystyle{plain}

\end{document}